\newcommand{\mz}{m_z}
\newcommand{\Q}{\mathcal{Q}}
\newcommand{\arginf}{\operatorname{arg\, inf}}
\newcommand{\eig}{\lambda}
\newcommand{\price}{\mathbf{\nu}}
\newtheorem{theorem}{Theorem}
\newtheorem{lemma}{Lemma}
\newtheorem{definition}{Definition}
\newtheorem{proof}{Proof}
\newcommand{\nablat}{\widehat{\nabla}_{\phi}}
\newcommand{\beq}{\begin{equation}}
\newcommand{\reals}{\mathbb{R}}
\renewcommand{\d}{\mathbf{d}}
\newcommand{\p}{T}
\newcommand{\lstar}{{l^*}}
\newcommand{\Rs}{\mathcal{S}_\text{stop}}
\newcommand{\Fa}{\mathbf{F}}
\newcommand{\bt}{\underline{\theta}}
\newcommand{\bP}{\bar{P}}
\newcommand{\plat}{\xi}
\newcommand{\ol}{\preceq_l}
\newcommand{\og}{\succeq_l}
\newcommand{\Vb}{\bar{V}}
\newcommand{\pdf}{\mathcal{M}}
\newcommand{\eeq}{\end{equation}}
\newcommand{\qed} {{$\hfill\blacksquare$}}
\newcommand{\E}                 {\Bbb{E}}
\renewcommand{\P}                 {\Bbb{P}}
\newcommand{\Mu}{\boldsymbol{\mu}}
\newcommand{\Ep}{\E^\mu}
\newcommand{\Ept}{\E^{\mu_\theta}}
\newcommand{\xr}{\delta_x}
\newcommand{\yr}{\delta_y}
\newcommand{\zr}{\plat_z}
\newcommand{\cop}{c_{\price}}
\def\L{{\cal L}}
\DeclareMathOperator*{\argmin}{arg\,min}
\DeclareMathOperator*{\argmax}{arg\,max}
\newcommand{\gr}{\succeq}
\newcommand{\gs}{\succ}
\newcommand{\lr}{\preceq}
\newcommand{\R}{\mathcal{R}}
\newcommand{\Cb}{\bar{C}}
\newcommand{\bs}{\bar{s}}
\title{
Sequential Detection with Mutual Information Stopping Cost}
\author{Vikram Krishnamurthy {\em Fellow IEEE}, Robert Bitmead {\em Fellow IEEE},  \\ Michel Gevers  {\em Fellow IEEE} and Erik Miehling\thanks{{Vikram Krishnamurthy 
(vikramk@ece.ubc.ca) and Erik Miehling (erikm@ece.ubc.ca) are with the Department of Electrical and Computer Engineering, University of British Columbia, Vancouver, BC, V6T 1Z4, Canada. Robert Bitmead (rbitmead@ucsd.edu) is with the Department of Mechanical and Aerospace  Engineering, University
of California San Diego, CA 92093-0411, USA.  Michel Gevers (gevers@csam.ucl.ac.be)  is with the Department of Mathematical Engineering, Universite Catholique de Louvain, Louvain-la-Neuve, Belgium. 

{The contribution of the third author Gevers was limited to the proofs of the submodularity properties in the Appendix. That} of the fourth  author Miehling was to code 
the algorithms proposed in the paper  and prepare the numerical examples in Section \ref{sec:numerical}.The work of the first and fourth authors was supported by a NSERC Strategic Grant and DRDC Ottawa. 
The work of the third author was supported by the Belgian Network DYSCO (Dynamical Systems, Control, and Optimization), funded by the Interuniversity Attraction Poles Programme, initiated by the Belgian State, Science Policy Office. The scientific responsibility rests with the authors.   
}} }
\begin{document}
%
\maketitle
\begin{abstract}
This paper formulates and solves a sequential detection problem that involves
the mutual information (stochastic observability)  of a Gaussian process observed in noise with missing measurements.
The main result is that  the optimal decision 
is characterized by a monotone policy on the partially ordered set of  positive definite covariance matrices.
This monotone structure  implies that numerically efficient algorithms  can be designed to estimate and
implement  monotone 
parametrized  decision policies.
The sequential detection problem is motivated by applications in radar scheduling
where the aim is to maintain the mutual information of all targets within a specified bound.
We illustrate the problem formulation and  performance of monotone parametrized policies via numerical examples in fly-by and
persistent-surveillance applications involving  a GMTI (Ground Moving Target Indicator) radar.
\end{abstract}
\begin{keywords} Sequential detection, stopping time problem, mutual information, Kalman filter, 
radar tracking, monotone decision policy, lattice programming
\end{keywords}

\section{Introduction}
\label{sec:intro}

Consider the following sequential detection problem. $L$  targets (Gaussian processes) are 
allocated priorities $\price_1,\price_2,\ldots,\price_L$.  A sensor obtains measurements of these $L$
evolving targets with signal to noise ratio (SNR) for target $l$ proportional to priority $\price_l$. A decision maker has two choices at each time $k$: If the decision maker chooses action $u_k=2$ (continue) then the sensor takes another measurement and accrues a measurement cost $\cop$.
If the decision maker chooses action $u_k=1$ (stop), then a stopping cost proportional to the mutual information 
(stochastic observability) of the targets is accrued and the
problem terminates. 
What is the optimal time for the decision maker to apply the stop action?
 Our  main result 
 is  that the optimal decision policy is a monotone function
of the target covariances (with respect to the positive definite {partial} ordering). This  facilitates devising numerically efficient algorithms
to compute the optimal policy.

 The sequential detection problem addressed in this paper is non-trivial since the decision
 to continue or stop
 is based on Bayesian estimates of the targets' states. In addition to Gaussian noise in the measurement
 process, the sensor has a non-zero
probability of missing observations.
 Hence, the sequential detection problem is  a partially observed 
stochastic control problem.   
Targets with high priority are observed with higher SNR and the uncertainty (covariance) of their estimates
decreases. Lower priority targets are observed with lower SNR  and their relative uncertainty increases. The  aim is  to devise a sequential detection policy that maintains the stochastic observability (mutual information or conditional entropy) of all targets within a specified bound.

{\em Why stochastic observability?}  As mentioned above, the stopping cost in our sequential detection problem is a function of the mutual information
(stochastic observability) of the targets.
The use of mutual
information as a measure of stochastic observability
was originally investigated in \cite{MH88}. In \cite{LI99}, determining optimal observer trajectories 
to maximize the stochastic observability of a single target is formulated 
as a stochastic dynamic programming problem -- but no structural
results or characterization of the optimal policy is given;
see also \cite{LB11}. We also refer to 
\cite{GL09} where a nice formulation of sequential waveform design for MIMO radar is given  using a Kullback-Leibler divergence based approach.
 As described  in Section \ref{sec:existence}, another {favorable} property
of stochastic observability is that its monotonicity with respect to covariances does not require stability of the state matrix of the target
(eigenvalues strictly inside the unit circle). 
In target models, the state matrix for the dynamics of the target has eigenvalues at 1 and thus is not stable. 

\noindent {\bf Organization and Main Results}:\\
(i) To motivate the sequential detection problem, Section \ref{sec:dynamics}  presents a GMTI (Ground moving target indicator)
radar  with macro/micro-manager architecture and 
{a} linear Gaussian state space model for the dynamics of each target. A Kalman filter is  used
to track each target over the time scale at which the micro-manager operates. 
Due to the {presence of missed detections},
the covariance update via the Riccati equation is  measurement dependent 
(unlike the standard Kalman filter
where the covariance is functionally independent of the measurements).
\\
(ii) In Section \ref{sec:seqdet}, the sequential detection problem 
 is formulated. 
The cost of stopping is the stochastic observability which is based on the mutual information of the targets. 
The optimal decision policy satisfies Bellman's  dynamic programming equation. However,   it is  not
possible to compute the optimal policy in closed form.\footnote{For stochastic control problems with continuum state spaces such as 
considered in this paper,  apart from special cases such as linear quadratic control and partially observed Markov
decision processes, there are no finite dimensional characterizations of the optimal policy \cite{Ber00b}.  Bellman's equation does not translate into
practical solution methodologies since the state space is a continuum. Quantizing  the space of covariance matrices  to a finite state space and then
 formulating the problem as a finite-state Markov decision process is {infeasible} since  such quantization
typically would require an intractably large state space.}  Despite this,
our main  result (Theorem \ref{thm:main}) shows that 
 the optimal  policy is a monotone function of the target covariances. This result
 is  useful for two reasons:
 (a) Algorithms can be designed to construct
 policies that satisfy this monotone structure. 
 (b) The monotone structural result holds without stability assumptions on the linear dynamics.
 So there is an inherent
 robustness  of this result since it holds even if the underlying  model parameters are 
not exactly specified.\\
  (iii)   Section \ref{sec:approx} exploits the monotone structure of the optimal decision policy to construct finite dimensional parametrized policies.
Then a simulation-based stochastic approximation (adaptive filtering) algorithm (Algorithm \ref{alg1}) is given to compute  these optimal parametrized policies.
The practical
implication is that,  instead of solving an intractable dynamic programming problem, we exploit the monotone structure of the optimal
policy to compute such  parametrized policies in polynomial time.
\\ 
(iv) 
 Section \ref{sec:numerical}  presents a detailed application of the sequential detection
 problem in GMTI radar resource management.
    By bounding the magnitude of the nonlinearity in the GMTI measurement model, we show that for typical operating values, the system can be approximated
by a linear time invariant state space model.
Then  detailed
 numerical examples are given that use the above monotone policy and stochastic approximation
 algorithm to {demonstrate} the performance of the radar management algorithms.
We 
present numerical results for  two important GMTI surveillance problems, namely, the target fly-by problem and the persistent surveillance problem.  In both cases, detailed numerical examples are given and the  performance is compared
 with  periodic stopping policies.
  Persistent surveillance has received much attention in the defense literature 
  \cite{BDW10,Whi10}, since it can  provide critical, long-term surveillance information. By tracking targets for long periods of time using aerial based radars, such as DRDC-Ottawa's XWEAR radar \cite{BDW10} or the U.S. Air Force's  Gorgon Stare Wide Area Airborne Surveillance System, operators can ``rewind the tapes'' in order to determine the origin of any target of interest \cite{Whi10}. 
   \\
{(v)} The appendix  presents the proof of Theorem \ref{thm:main}.
It uses lattice programming and supermodularity. A crucial step in the proof is that the conditional
entropy described by the Riccati equation update is monotone.
This involves use of Theorem \ref{thm:bitmead} which derives monotone properties of the Riccati and Lyapunov
equations.
The idea of using lattice programming and supermodularity to prove the existence of monotone
policies is well known in stochastic control, see \cite{HS84} for a textbook treatment of the countable state Markov decision
process case. However, in our case since the state space comprises  covariance matrices  that are only partially ordered, the optimal policy is monotone
with respect to this partial order. 
The structural results of this paper allow us to determine the nature of the optimal
policy without brute force numerical computation.

{\em Motivation -- GMTI Radar Resource Management}: This paper is motivated by GMTI radar resource management problems
 \cite{BP99,WK06,KD09}.
The radar  {macro-manager} deals with 
priority allocation of targets, determining regions to scan, 
 and target revisit times. 
The radar {\em micro-manager} controls the target  tracking algorithm
and determines how long to maintain a priority allocation set by the macro-manager.  In the context of GMTI radar 
micro-management,
the sequential detection problem outlined above reads: Suppose 
 the radar macro-manager specifies a particular target priority allocation.  How long should the micro-manager track targets using the current
priority allocation  before returning control to the macro-manager? Our main result, that the optimal decision policy is a monotone
function of the targets' covariances, facilitates devising numerically efficient algorithms for the optimal radar micro-management policy.

\section{Radar Manager Architecture and Target Dynamics}
\label{sec:dynamics}

This section  motivates the sequential detection problem by outlining
 the macro/micro-manager architecture of the GMTI radar and target dynamics.
 (The linear dynamics of the target model are justified in Section \ref{sec:appgmti} where
a detailed description is given of the GMTI kinematic model). 

\subsection{Macro- and Micro-manager Architecture} \label{sec:architecture}
(The reader who is {uninterested} in the
radar application can skip this subsection.)
Consider
a GMTI radar with an agile  beam tracking  $L$ ground moving targets indexed by  $l \in \{1,\ldots,L\}$.
In this section we describe a two-time-scale radar management scheme comprised of a micro-manager
and a macro-manager.

\paragraph{Macro-manager} At the beginning
of each scheduling interval $n$, the radar macro-manager allocates the target priority vector $\price_n = (\price^1_n,\ldots,\price^L_n)$. Here the priority
of target $l$ is $\price_n^l \in [0,1]$ and $\sum_{l=1}^L \price_n^l = 1$. The priority weight $\price_n^l$ determines what resources the radar devotes to 
target $l$. This affects the track variances as  described below.
 The choice $\price_n$  is  typically  
 rule-based, depending on several  extrinsic factors.   For example, in  GMTI radar systems, the
 macro-manager 
picks the target priority vector $\price_{n+1}$ based on the track variances (uncertainty) and
 threat levels of the $L$ targets.
The track variances of the $L$ targets are determined by the Bayesian tracker as discussed below.

\paragraph{Micro-manager} Once  the target priority vector $\price$ is chosen (we omit the subscript  $n$ for convenience), the micro-manager  is initiated.
 The clock on  the  fast time scale $k$ (which is called the decision epoch time scale in Section
 \ref{sec:appgmti})
 is reset to  $k=0$ and commences
ticking.
At this decision epoch time scale, $k=0,1,\ldots$, the $L$ targets are tracked/estimated by a Bayesian tracker.
Target $l$ with priority $\price^l$ is allocated the fraction 
$\price^l$ of the total number of observations (by integrating $\price^l \Delta$ observations on the fast time scale, see 
Section \ref{sec:appgmti})
 so that the observation noise
variance is scaled by $1/(\price_l \Delta)$.
The question we seek to answer is:
{\em How long should the micro-manager track
the $L$ targets with priority vector $\price$  before
returning control to 
 the macro-manager to pick
a new priority vector}? We formulate this as a sequential decision problem.

Note that the priority allocation vector $\price$ and track variances of the $L$ targets
 capture the interaction between the micro- and macro-managers.

\subsection{Target Kinematic Model and Tracker} \label{sec:kinematic}
We now describe the target kinematic model at the epoch time scale $k$:
Let  $s^{l}_{k} =  \left[x^{l}_{k}, \dot{x}^{l}_{k}, y^{l}_{k}, \dot{y}^{l}_{k}\right]^{T}$ denote the Cartesian coordinates and velocities  of the ground moving target $l \in \{1,\ldots,L\}$.
Section \ref{sec:appgmti} shows that on the
 micro-manager time scale,  the GMTI target dynamics can be approximated
  as the following  linear time invariant Gaussian state space model
\begin{align} \label{eq:gauss}
{s}_{k+1}^l  &= F {s}_k^l + G w_k^l, \nonumber\\
{z}_{k}^l &= \begin{cases} H  {s}_k^l +  \frac{1}{\sqrt{\price^l \Delta}}  v_k^l,    &  \text{ with probability $p_d^l$,}\\
\emptyset, &  \text{ with probability $1- p_d^l$.}  \end{cases}
\end{align}
The parameters $F$, $G$, $H$ are defined in Section \ref{sec:numerical}. They can  be target  ($l$) dependent; to simplify notation we have not done this.
In (\ref{eq:gauss}),  $z^{l}_{k}$ denotes a 3-dimensional observation vector of target $l$ at epoch time $k$.
 The noise processes
$w^{l}_{k}$ and $v^{l}_{k}/\sqrt{\price^l \Delta}$ are mutually independent, white, zero-mean Gaussian random vectors with covariance matrices $ Q^{l} $ and $ R^{l}(\price^l) $, respectively. ($Q$ and $R$ are defined in Section \ref{sec:numerical}).
Finally, $p_d^l$ denotes the probability of detection of target $l$, and $\emptyset$ represents a missed observation that contains no information about state $s$.\footnote{With suitable notational abuse, we use `$\emptyset$' as a label to denote a missing observation.
When a missing observation is encountered, the track estimate is updated by the Kalman predictor with covariance update (\ref{eq:liapunov}).}

Define the 
one-step-ahead predicted covariance matrix of target $l$ at time $k$ as
$$
P^l_{k} = \E\biggl\{ \bigl( s_k^l - \E\{s_k^l | z^l_{1:k-1}\} \bigr) \bigl( s_k^l - \E\{s_k^l | z^l_{1:k-1}\} \bigr) ^\p \biggr\}.
$$ 
Here the superscript $T$ denotes  transpose.
Based on the priority vector $\price$ and model (\ref{eq:gauss}),  the covariance 
of the state estimate of target $l \in \{1,\ldots,L\}$ is 
computed via the following  measurement  dependent Riccati equation
\beq
\label{eq:riccEqn}
 P^l_{k+1} =   \mathcal{R}(P^l_{k},z_k) 
{\buildrel\rm	def\over=}  FP^l_{k}F^{T} + Q^l 
- I(z_k^l\neq \emptyset)  FP^l_{k}H^{T}\bigl(HP^l_{k}H^{T}+R^l(\price^l)\bigr)^{-1}HP^l_{k}F^{T} .
\eeq
Here $I(\cdot)$ denotes the indicator function.
In the special case when a target $l$ is allocated zero priority ($\price^{(l)} = 0$), or when there is a missing observation
($z_k^l = \emptyset$), 
then 
(\ref{eq:riccEqn}) specializes to  the Kalman predictor updated via the Lyapunov equation
\beq P^l_{k|k-1} = \mathcal{L}(P^l_{k}) {\buildrel\rm	def\over=}  FP^l_{k-1}{F}^{T} + Q^l 
\label{eq:liapunov} .\eeq

\section{Sequential Detection Problem}
\label{sec:seqdet}

This section presents our main structural result on the sequential detection problem.
Section \ref{sec:formu} formulates the stopping cost in terms of the mutual information 
of the targets being tracked.
Section \ref{sec:seqdetf} formulates the sequential detection problem. The optimal decision policy is expressed as the solution of a stochastic dynamic programming problem.
The main result (Theorem \ref{thm:main} in Section \ref{sec:existence})  states that the optimal policy is a monotone function of  the target covariance. As a result, the optimal policy can be parametrized by monotone policies and estimated in a computationally
efficient manner via stochastic approximation (adaptive filtering) algorithms. This is described in Section \ref{sec:approx}.

{\em Notation}: Given the priority vector $\price$ allocated by the macro-manager,
let $a \in \{1,\ldots , L\}$ denote the highest priority target, i.e,  $a = \argmax _l \price^l$.
Its covariance is denoted $P^a$.
We use the notation $P^{-a}$ to denote the set of covariance matrices of the remaining $L-1$ targets.
The sequential decision problem  below is formulated in terms of  $(P^a,P^{-a})$.

\subsection{Formulation of  Mutual Information Stopping Cost} \label{sec:formu}
As mentioned in Section \ref{sec:architecture},  once the radar macro-manager  determines 
the priority vector $\price$, the micro-manager switches on and its clock $k=1,2,\ldots$ begins to tick.
The radar micro-manager then solves a sequential detection problem involving two actions: 
At each slot $k$, the micro-manager chooses action $u_k \in \{\text{1 (stop) }, \text{2 (continue) }\}$.
To formulate the sequential detection problem, this subsection specifies the costs incurred with
these actions.

{\em Radar Operating cost}: If the micro-manager  chooses action $u_k=2$ (continue), it incurs the  radar operating cost denoted as $\cop$. Here $\cop>0$ depends on the radar operating parameters,

{\em Stopping cost -- Stochastic Observability}: If the micro-manager chooses
action 
  $u_k=1$ (stop), a stopping cost is incurred. In this paper, we formulate a stopping
  cost in terms of the stochastic observability
  of the targets, see also \cite{MH88,LI99}. Define the stochastic observability of each  target 
  $l \in \{1,,\ldots,L\}$ as  the mutual information
  \beq
  I(s^l_k; z_{1:k}^l) = \alpha^l h(s_k^l) - \beta^l h(s^l_k|z_{1:k}^l). \label{eq:mutual}\eeq
     In  (\ref{eq:mutual}), $\alpha^l$ and $\beta^l$ are non-negative constants chosen by the designer.
  Recall from  information theory \cite{CT06}, that $h(s_k^l)$ denotes
  the differential entropy of 
 target $l$ at time $k$. Also  $ h(s^{l}_k | z^{l}_{1:k})$ denotes the conditional differential
 entropy  of target $l$ at  time $k$ given the observation history
$z^{l}_{1:k}$. The mutual information $I(s^l_k;z_{1:k}^l) $ is the average reduction in uncertainty
of the target's coordinates $s_k^l$  given measurements $z_{1:k}^l$.
In the standard definition of mutual information $\alpha^l = \beta^l = 1$. However, we are also
interested in the special case when $\alpha^l=0$, in which case, we are considering the conditional entropy
for each target (see Case 4 below).

Consider the following stopping cost if the micro-manager chooses action $u_k=1$ at time $k$:
\beq \label{eq:stopp}
\Cb(s_k,z_{1:k}) = - I(s_k^a;z_{1:k}^a) + \Fa(\{I(s_k^{l},z_{1:k}^{l}); l \neq a\}) .\eeq
Recall $a$ denotes the highest priority target.
In (\ref{eq:stopp}), $\Fa(\cdot)$ denotes a function chosen by the designer {to be}   monotone increasing  in each of its $L-1$ variables (examples are given below).

The following lemma follows from straightforward arguments in \cite{CT06}.

\begin{lemma}  \label{lem:gauss}Under the assumption of linear  Gaussian dynamics  (\ref{eq:gauss}) for each target $l$, the mutual
information of target $l$ defined in (\ref{eq:mutual}) is
\begin{align}
I(s^l_k,z_{1:k}^l) &= \alpha^l\log|\bP_{k}^l| - \beta^l \log |P_{k}^l|, \label{eq:mutualinfo}
\end{align}
where $\bP_k^l  = \E\{ (s_k^l - \E\{s_k^l\})(s_k^l - \E\{s_k^l\})^\p\},$ 
$P_k^l =  \E\{ (s^l_k - \E\{s^l_k|z_{1:k}\})(s_k^l - \E\{s_k^l|z_{1:k}\})^\p\}.$
Here $\bP_k^l$ denotes the predicted (a priori) covariance of target $l$ 
at epoch $k$
given no observations. It is  computed
using  the Kalman predictor covariance update (\ref{eq:liapunov}) for $k$ iterations. Also,  $P_k^l$  is the  posterior covariance and is computed via the Kalman filter
covariance update (\ref{eq:riccEqn}).
\qed \end{lemma}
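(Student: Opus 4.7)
The plan is to apply the closed-form expression for the differential entropy of a Gaussian random vector to each of the two entropy terms appearing in (\ref{eq:mutual}), using the fact that the linear Gaussian dynamics (\ref{eq:gauss}) preserve Gaussianity of both the prior and the posterior of $s_k^l$.

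First I would recall the standard formula (e.g.\ \cite{CT06}) that for $X\sim\mathcal{N}(\mu,\Sigma)$ on $\reals^n$, $h(X) = \tfrac{1}{2}\log\!\bigl((2\pi e)^n|\Sigma|\bigr)$. Because the model (\ref{eq:gauss}) is linear with Gaussian process and measurement noise, and the alternative ``$\emptyset$'' outcome is a noninformative symbol that simply skips the measurement update, a routine induction on $k$ shows that $s_k^l$ is marginally Gaussian with covariance $\bP_k^l$ obtained by iterating the Lyapunov map (\ref{eq:liapunov}), and that the conditional law of $s_k^l$ given $z_{1:k}^l$ is Gaussian with covariance $P_k^l$ obtained by iterating the measurement-dependent Riccati map (\ref{eq:riccEqn}).

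Substituting these two Gaussian entropies into the generalized mutual information (\ref{eq:mutual}) yields
\begin{equation*}
I(s_k^l;z_{1:k}^l) \;=\; \tfrac{\alpha^l}{2}\log\!\bigl((2\pi e)^n|\bP_k^l|\bigr) \;-\; \tfrac{\beta^l}{2}\log\!\bigl((2\pi e)^n|P_k^l|\bigr).
\end{equation*}
Absorbing the factor $\tfrac{1}{2}$ and the dimensional additive constant $\log(2\pi e)^n$ into the (designer-chosen) weights $\alpha^l,\beta^l$, together with a constant offset that is irrelevant to the stopping decision, reduces the expression to the stated form $\alpha^l\log|\bP_k^l| - \beta^l\log|P_k^l|$.

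The one subtlety I would flag --- and the main place the argument departs from the textbook Gaussian mutual information calculation --- is that, because of missed detections, the Riccati update (\ref{eq:riccEqn}) depends on the indicator $I(z_k^l\neq\emptyset)$, so $P_k^l$ is itself a random matrix whose value is a function of the realized detection/miss pattern. I would therefore interpret (\ref{eq:mutualinfo}) as a pointwise identity along each such realization; the outer expectation over that exogenous binary detection sequence is then deferred to the $\E$ appearing in the Bellman equation of Section~\ref{sec:seqdet}, and no additional argument is required inside the lemma itself.
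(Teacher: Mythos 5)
Your proposal is correct and follows exactly the route the paper intends: the paper gives no explicit proof beyond citing the standard Gaussian differential entropy formula $h(X)=\tfrac{1}{2}\log\bigl((2\pi e)^m|\Sigma|\bigr)$ from \cite{CT06}, which is precisely what you apply to both entropy terms in (\ref{eq:mutual}). Your two flagged subtleties --- absorbing the factor $\tfrac{1}{2}$ and the $(\alpha^l-\beta^l)\tfrac{m}{2}\log(2\pi e)$ offset into the designer-chosen constants, and reading (\ref{eq:mutualinfo}) pointwise along the realized detection/miss sequence since $P_k^l$ is measurement-dependent --- are both consistent with how the paper uses the lemma and, if anything, make the argument more careful than the original.
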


Using Lemma \ref{lem:gauss}, the stopping cost  $\Cb(\cdot,\cdot)$ in (\ref{eq:stopp}) can
be expressed in terms of the Kalman filter and predictor
covariances. Define the four-tuple of {sets of} covariance matrices
\beq P_k = (P_k^a,\bP_k^a,P_k^{-a},\bP_k^{-a}) .\label{eq:4tuple}\eeq
Therefore the stopping cost (\ref{eq:stopp}) can be expressed as
\beq \label{eq:stop}
\Cb(P_k) =   -\alpha^a \log|\bP_k^a|
+  \beta^a \log|P_k^a| + \Fa \left(\{\alpha^l \log|\bP_k^l|
-  \beta^l \log|P_k^l| ;\; l \neq a\right\}) .\eeq

{\bf Examples}:  We consider the following examples of  $\Fa(\cdot)$ in (\ref{eq:stop}) :

\noindent{\em Case 1. Maximum mutual information difference stopping cost}: 
$\Cb(s_k,z_{1;k})= 
-I(s_k^a,z_{1:k}^a)+\max_{l \neq a} I(s_k^{l},z_{1:k}^{l}) $ in which case,
\beq \Cb(P_k) =  -\alpha^a \log|\bP_k^a|
+  \beta^a \log|P_k^a| + \max_{l \neq a} \left[\alpha^l \log|\bP_k^l|
-  \beta^l \log|P_k^l| \right] .\label{eq:maxp}\eeq
The stopping cost is the difference in mutual information between the target 
with highest mutual information and the target with highest priority. This can be viewed as a stopping cost that
discourages stopping too soon.

\noindent {\em Case 2. Minimum  mutual information difference stopping cost}: 
 $\Cb(s_k,z_{1;k})= -I(s_k^a,z_{1:k}^a)+\min_{l \neq a} I(s_k^{l},z_{1:k}^{l}) $
 in which case, 
 \beq \Cb(P_k) =  -\alpha^a \log|\bP_k^a|
+  \beta^a \log|P_k^a| + \min_{l \neq a} \left[\alpha^l \log|\bP_k^l|
-  \beta^l \log|P_k^l| \right]. \label{eq:minp}\eeq
The stopping cost is the difference in mutual information between  the target 
with lowest mutual information and the target with highest priority. This can be viewed as a conservative
stopping cost in the sense that preference is given to stop sooner.

\noindent {\em Case 3. Average mutual information difference stopping cost}: 
$\Cb(s_k,z_{1;k})= -I(s_k^a,z_{1:k}^a)+\sum_{l \neq a} I(s_k^{l},z_{1:k}^{l}) $  in which case,
\beq \Cb(P_k)=
-\alpha^a \log|\bP_k^a|
+  \beta^a \log|P_k^a| + \sum_{l \neq a} \left[\alpha^l \log|\bP_k^l|
-  \beta^l \log|P_k^l| \right]. \label{eq:avgp}
\eeq
This stopping cost is the difference between  the average mutual information of the $L-1$ targets
(if $\alpha^l$ and $\beta^l$ include a $1/(L-1)$ term)  and the
highest priority target. 

\noindent {\em Case 4. Conditional differential entropy difference stopping cost}: We are also interested in the following special case which involves scheduling between a Kalman filter and $L-1$ measurement-free Kalman predictors,
see \cite{EKN05}.
Suppose the high priority target $a$ is allocated a  Kalman filter and the remaining $L-1$  targets are allocated measurement-free Kalman predictors. This corresponds 
to the case where  $\price^a = 1$ and $\price^l  =0$ for $l\neq a$ in (\ref{eq:gauss}),  that is,
the radar assigns all its resources to target $a$ and no resources to any other target.
 Then solving the sequential detection problem is
equivalent to posing the following question: What is the optimal stopping time $\tau$ when  the radar should decide to start tracking
another target?
In this case,   the mutual information of each target $l\neq a$ is zero
(since $\bP_k^l = P_k^l$ in (\ref{eq:mutualinfo})). So 
it is appropriate to choose  $\alpha^l = 0$ for $l\neq a$ in (\ref{eq:stop}). Note
from (\ref{eq:mutual}), that when $\alpha^l = 0$, the stopping cost of each individual target becomes  the
negative of its conditional entropy.
That is, the  stopping
cost is the difference in the conditional differential entropy instead of the mutual information.

{\em Discussion}: A natural question is: {\em How to pick 
the stopping cost (\ref{eq:stop}) depending on the  target priorities?}
One can design the choice of stopping cost (namely, Case 1, 2 or 3 above) depending on the  range
of target priorities. For example, suppose the priority of a target is the negative
of its mutual information.\\
(i) If two or more targets have similar
high priorities, it makes sense to use Case 2 since the stopping cost $\Cb(P)$ would be close to zero.
This would give incentive for the micro-manager to stop quickly and consider other high priority targets.
Note also that if multiple targets have  similar high priorities,
the radar would devote similar amounts  of time to them according to the protocol in Section 
\ref{sec:architecture}, thereby not compromising the accuracy of the estimates of these targets.
\\
(ii) If  target $a$ has a significantly higher priority than
all other targets, then Case 1 or 3 can be chosen for the stopping cost. As mentioned above, Case 1 would discourage stopping
too soon thereby allocating more resources to target $a$. In comparison, Case 3 is a compromise between
Case 1 and Case 2, since it would consider the average of all other target priorities (instead of the
maximum or minimum).
\\
Since, as will be shown in Section \ref{sec:approx}, the parametrized micro-management policies can be
implemented efficiently, the radar system can switch between the above stopping costs in real time
(at the macro-manager time scale).
Finally,  from a practical point of view, the macro-manager, which is responsible for assigning the priority allocations, will rarely assign equal priorities to two targets. This is due to the fact that the priority computation in realistic scenarios is based on many factors such as target proximity and heading relative to  assets in surveillance region, error covariances in state estimates, and target type.

\subsection{Formulation of Sequential Decision Problem}\label{sec:seqdetf}
With the above stopping and continuing costs, we are now ready to formulate the sequential detection problem
that we wish to solve. Let $\mu$ denote a  stationary decision policy of the form  
\beq \mu: P_k\rightarrow u_{k+1} \in \{1 \text{ (stop) },  2 \text{ (continue) }\} . \eeq
Recall from (\ref{eq:4tuple})  that $P_k$  is a 4-tuple of {sets of} covariance matrices.
Let 
$\Mu$ denote the family of such stationary policies.
 For any prior 4-tuple  $P_0$ (recall notation  (\ref{eq:4tuple}))
  and policy $\mu \in \Mu$ chosen by the micro-manager, define the stopping time $\tau = \inf\{k: u_k=1\}$.
 The following cost  is associated with the sequential decision procedure:
\beq \label{eq:csdef}
J_\mu(P) = \Ep\{ (\tau-1) \cop
+  \Cb(P_\tau)| P_0=P \}.
\eeq
Here $c_\nu$ is the radar operating cost and $\bar C$ the stopping cost introduced in Section~\ref{sec:formu}.
Also, $\Ep$ denotes expectation with respect to stopping time $\tau$ and initial condition $P$.
(A  measure-theoretic definition of $\Ep$, 
which involves an absorbing state to deal with stopping time $\tau$, is given in~\cite{HL96}).

The goal is to determine the optimal stopping time $\tau$ with minimal cost, that is, compute the optimal policy $\mu^* \in \Mu$ to minimize (\ref{eq:csdef}). Denote the optimal cost as
\beq \label{eq: optim}
J_{\mu^*}(P) = \inf_{\mu \in \Mu} J_\mu(P).
\eeq
 The existence of an optimal stationary policy $\mu^*$ follows from \cite[Prop.1.3, Chapter 3]{Ber00b}.
Since $\cop$ is  non-negative,  for the conditional entropy cost function of Case 4 in Section \ref{sec:formu}, 
 stopping is guaranteed
in finite time, i.e., $\tau$ is finite with probability~1. For Cases (1) to (3), in general $\tau$ is not necessarily finite --
however, this does not cause problems  from a practical point of view since the micro-manager  has typically
a pre-specified
 upper time bound  at
which it always chooses $u_k = 1$ and reverts back to the macro-manager. Alternatively,
for Cases (1) to (3), if one truncates
$\Cb(P)$ to some upper bound, then again stopping is guaranteed in finite time.

Considering  the above  cost (\ref{eq:csdef}),
the optimal stationary policy $\mu^* \in \Mu$ and associated value function
 $\Vb(P) = J_{\mu^*}(P)$
are the solution of the following
 ``Bellman's dynamic programming  equation'' \cite{HS84} (Recall our notation $ P = (P^a,\bP^a,P^{-a},\bP^{-a})$.)
\begin{align}
\label{eq:valuefcn}
   \Vb(P) &= \min\bigl\{ \Cb(P),
   \cop + \mathbb{E}_{z}\left[V\bigl(\mathcal{R}(P^{a},z^a),\mathcal{L}(\bP^a),\mathcal{R}(P^{-a},z^{-a}),
   \mathcal{L}(\bP^{-a})
   \bigr)\right]  \bigr\},\nonumber\\
 \mu^*(P) &= \argmin\bigl\{
 \Cb(P),
   \cop + \mathbb{E}_{z}\left[V\bigl(\mathcal{R}(P^{a},z^a),\mathcal{L}(\bP^a),\mathcal{R}(P^{-a},z^{-a}),
   \mathcal{L}(\bP^{-a})
   \bigr)\right]
 \bigr\}, \end{align}
where $\mathcal{R}$ and $\mathcal{L}$ were defined in (\ref{eq:riccEqn}) and  (\ref{eq:liapunov}).
  Here $\R(P^{-a},z^{-a})$ denotes the Kalman filter covariance  update for the $L-1$ lower priority targets according to
   (\ref{eq:riccEqn}).
  Our goal is to characterize the optimal policy $\mu^*$ and optimal  stopping set defined as
\beq \Rs = \{(P^a,\bP^a,P^{-a},\bP^{-a}): \mu^*(P^a,\bP^a,P^{-a},\bP^{-a})  = 1\}. \label{eq:stopset}\eeq
In the special Case 4 of Section \ref{sec:formu}, when $\alpha^l = 0$, then 
$\Rs = \{(P^a,P^{-a}): \mu^*(P^a,P^{-a})  = 1\}$.

The dynamic programming equation (\ref{eq:valuefcn}) does not translate into practical solution methodologies since the  space of
$P$, 4-tuples of {sets of} positive definite matrices, is uncountable, and it is not possible to compute the optimal decision policy in closed form.

\subsection{Main Result: Monotone  Optimal Decision Policy}
\label{sec:existence}
Our main result below  shows that the  optimal decision policy $\mu^*$ 
is a monotone function of  the covariance matrices of the targets.
To characterize $\mu^*$  in the sequential decision problem below,
we introduce the following notation:\\
Let  $m$ denote the dimension of the state
$s$ in (\ref{eq:gauss}). (In the GMTI radar example $m=4$). \\
Let $\pdf$ denote the set of all $m\times m$  real-valued, symmetric  positive semi-definite matrices.
For  $P, Q \in \pdf$ define the positive definite partial ordering $\gr $ as $P\gr Q$ if $x^T P x \geq x^T Q x$ for all $x\neq 0$, and $P\gs Q$ if $x^T P x > x^T Q x$ for $x\neq 0$.
Define $\lr$ with the inequalities reversed. Notice that $[\pdf,\gr]$ is a partially ordered set (poset).\\
Note that ordering positive definite matrices also orders  their eigenvalues. Let $x=(x_1,\ldots,x_m)$ and $y=(y_1,\ldots,y_m)$ denote vectors with elements in $\reals_+$.
Then 
define  the componentwise partial order on $\reals^m$ (denoted by  $\ol$) as
 $x \ol y$ (equivalently, $y \og x$) if
$x_i \leq y_i$ for all $i=1,\ldots,m$.

For any matrix $P \in \pdf$, let $\eig_P \in \reals_+^m$  denote the eigenvalues of  $P$ arranged in decreasing order as a vector.
Note $P \gr Q$ implies $\eig_P \og  \eig_Q$.  Clearly, $[\reals_+^m,\og]$ is a poset.

Define {scalar function} $f$ to be increasing\footnote{Throughout this paper, we use the term ``increasing" in the weak sense. That is ``increasing" means
non-decreasing. Similarly, the term ``decreasing" means non-increasing.} if $\eig_P \ol \eig_Q $ implies $f(\eig_P) 
\leq f(\eig_Q)$, or equivalently, if $P \lr Q$ implies $f(P) < f(Q)$. Finally we say that $f(P^{-a})$ is increasing in $P^{-a}$ if $f(\cdot)$  is increasing
in each component $P^l$ of $P^{-a}$, $l \neq a$.

The following is the main result of this paper regarding the policy $\mu^*(P^a,\bP^a,P^{-a},\bP^{-a})$.

\begin{framed}
\begin{theorem} \label{thm:main}
Consider the sequential detection problem (\ref{eq:csdef}) with stochastic observability
cost (\ref{eq:stop}) and stopping set (\ref{eq:stopset}).
\begin{enumerate}
\item The optimal decision  policy $\mu^*(P^a,\bP^a,P^{-a},\bP^{-a})$ 
is  increasing in $P^a$,
decreasing in $\bP^a$, decreasing in $P^{-a}$, and increasing in $\bP^{-a}$ 
on the poset $[\pdf,\gr]$. 
Alternatively, $\mu^*(P^a,\bP^a,P^{-a},\bP^{-a})$ is increasing in $\eig_{P^a}$,
decreasing in $\eig_{{\bP}^a}$,
 decreasing in $\eig_{P^{-a}}$ and increasing in $\eig_{\bP^{-a}}$
on the poset $[\reals_+^m,\og]$.  Here $\eig_{P^{-a}}$  denotes the $L-1$ vectors of eigenvalues $\eig_{P^l}$, $l\neq a$
(and similarly for  $\eig_{{\bP}^a}$).
\item  In the special case when $\alpha^l= 0$ for all $l\in \{1,\ldots,L\}$, (i.e., Case 4 in Section \ref{sec:formu} where
stopping cost is the conditional entropy) 
the optimal policy $\mu^*(P^a,P^{-a})$  is increasing in $P^a$
and decreasing in $P^{-a}$  on the poset $[\pdf,\gr]$. 
Alternatively, $\mu^*(P^a,P^{-a})$ is  increasing in $\eig_{P^a}$,
and 
 decreasing in $\eig_{P^{-a}}$ 
on the poset $[\reals_+^m,\og]$. \qed\end{enumerate} 
\end{theorem}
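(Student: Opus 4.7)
The plan is to combine value iteration with a lattice-programming/Topkis-type argument on the positive-definite cone. I would let $V_0 \equiv 0$ and define the Bellman iterate
\begin{equation}
V_{n+1}(P) = \min\bigl\{\Cb(P),\; \cop + \mathbb{E}_z[V_n(\R(P^a,z^a), \L(\bP^a), \R(P^{-a},z^{-a}), \L(\bP^{-a}))]\bigr\},
\end{equation}
so that $V_n \uparrow \Vb$ under the usual regularity conditions of \cite{Ber00b}. Case 2 of the theorem is a direct specialization of Case 1 (the $\bP$ variables drop out when $\alpha^l = 0$), so I would focus on Case 1.

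First I would prove by induction on $n$ that each $V_n(P)$ is increasing in $P^a$, decreasing in $\bP^a$, decreasing in $P^{-a}$, and increasing in $\bP^{-a}$, all on the partial order $[\pdf, \gr]$. The base case is trivial. For the induction step, the stopping cost $\Cb(P)$ inherits these four monotonicities directly from its log-determinant form in (\ref{eq:stop}), using that $\log|\cdot|$ is monotone on the PSD cone and that $\Fa$ is increasing in each argument. The continuation branch inherits them from the inductive hypothesis together with Theorem \ref{thm:bitmead}, which gives monotonicity of $\R(\cdot, z)$ and $\L(\cdot)$ under $\gr$ for every fixed $z$; expectation over $z$ and pointwise minimum both preserve monotonicity, so $V_{n+1}$ retains the four monotonicities. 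Passing to the limit yields the analogous properties for $\Vb$, and the eigenvalue statement follows because $P \gr Q$ implies $\eig_P \og \eig_Q$.

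For the policy statement, I would write $\phi(P,1) = \Cb(P)$ and $\phi(P,2) = \cop + \mathbb{E}_z[\Vb(\R(P^a,z^a),\L(\bP^a),\R(P^{-a},z^{-a}),\L(\bP^{-a}))]$, so that $\mu^*(P) = \argmin_u \phi(P,u)$. By Topkis's theorem on a poset, it suffices to show that $\phi$ is submodular in $(P^a, u)$ on $[\pdf,\gr] \times \{1,2\}$, and similarly (with the opposite sense) in the other three coordinates. Concretely, for $P^a \lr P'^a$ with all other components held fixed, I need
\begin{equation}
\mathbb{E}_z[\Vb(\R(P'^a,z^a),\ldots)] - \mathbb{E}_z[\Vb(\R(P^a,z^a),\ldots)] \;\leq\; \Cb(P') - \Cb(P),
\end{equation}
and three analogous inequalities for $\bP^a$, $P^{-a}$, $\bP^{-a}$. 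I would establish these by an inductive submodularity argument along the value iteration that runs in parallel with Step 1, using Theorem \ref{thm:bitmead} to control how the Riccati and Lyapunov operators propagate log-determinant differences through $V_n$, and leveraging that $\Cb$ depends on $P^a$ linearly through $\beta^a \log|P^a|$ while the composition $P^a \mapsto \R(P^a,z) \mapsto V_n$ is dominated by this linear term because $\R(P,z) \lr F P F^\p + Q$.

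The main obstacle is exactly this submodularity step. Because $\gr$ is only a partial order on $\pdf$, I cannot reduce the argument to a scalar comparison and must work directly with the Riccati/Lyapunov operators on the cone; furthermore, the coupling between $P^a$ and $\bP^a$ through the stopping cost, together with the random missed-detection indicator $I(z_k^a \neq \emptyset)$ inside $\R$, means that the submodular differencing has to be carried out jointly in all four coordinates and then averaged over $z$. This is where Theorem \ref{thm:bitmead} is indispensable, and where the appendix does the real work; the rest of the argument is then a standard application of lattice programming as in \cite{HS84}.
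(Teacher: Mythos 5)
Your overall framework (value iteration, Topkis, Theorem \ref{thm:bitmead}) matches the paper's, and your Step 1 (monotonicity of $V_n$ and hence of $\Vb$) is fine. But the step you yourself flag as ``the main obstacle'' --- the submodularity inequality
$\mathbb{E}_z[\Vb(\R(P'^a,z^a),\ldots)] - \mathbb{E}_z[\Vb(\R(P^a,z^a),\ldots)] \leq \Cb(P') - \Cb(P)$ ---
is left unproved, and the mechanism you propose for it does not work as stated. The observation that $\R(P,z) \lr FPF^\p + Q$ is an ordering of matrices at a single point; it gives no control over the \emph{difference} $V_n(\R(P'^a,z),\cdot) - V_n(\R(P^a,z),\cdot)$ between two starting covariances, and there is no reason the composition $P^a \mapsto \R(P^a,z) \mapsto V_n$ is ``dominated by'' the term $\beta^a\log|P^a|$ in $\Cb$. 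An induction that tries to propagate submodularity of $V_n$ directly through the Riccati map in four coupled matrix coordinates does not obviously close, and you give no inductive invariant that would make it close.

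The paper sidesteps this entirely with a coordinate change that is the real key idea: set $V(P) = \Vb(P) - \Cb(P)$. Since $\argmin$ is translation invariant, the policy is unchanged, but now $\Q(P,1) \equiv 0$, so submodularity of $\Q(P,u)$ in $(P^a,u)$ reduces to plain monotonicity of $\Q(P,2)$ --- exactly the kind of property that \emph{does} propagate through value iteration via Lemma \ref{lem:1}. The price of the transformation is that the one-step cost becomes
$C(P) = \cop - \Cb(P) + \sum_{z}\Cb(\R(P^a,z^a),\L(\bP^a),\R(P^{-a},z^{-a}),\L(\bP^{-a}))\,q_{z^a}q_{z^{-a}}$,
which is a telescoped difference involving terms of the form $\log\frac{|\R(P,z)|}{|P|}$ and $\log\frac{|\L(\bP)|}{|\bP|}$; monotonicity of these \emph{ratios} is precisely what Theorem \ref{thm:bitmead} supplies (Lemma \ref{lem:bob}), and notably it holds without stability of $F$. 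In your untransformed formulation, Theorem \ref{thm:bitmead} never actually gets a foothold: monotonicity of $\Cb$ alone is elementary and does not need it, while the submodularity inequality needs the determinant-ratio result in exactly the telescoped form that the coordinate change produces. So the proposal has the right ingredients but is missing the one idea that makes the induction on submodularity tractable; as written, the central step is a gap.
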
 \end{framed}

The proof is  in Appendix \ref{app:th1}.
 The monotone property of the  optimal decision policy $\mu^*$ 
is useful since (as described in Section \ref{sec:approx}) {parametrized monotone} policies are
 readily implementable at the radar micro-manager level and can be adapted in real time.
Note that in the context of GMTI radar, the above policy  is 
equivalent to the radar  micro-manager
 {\em opportunistically} deciding when to stop looking at a target: If the measured
 quality of the current target  is better than some threshold, then continue; otherwise stop.
 
 To get some intuition, consider the second claim of Theorem \ref{thm:main} when
 each state process has dimension $m=1$. Then the covariance of each target is a  non-negative scalar.
 The second claim of Theorem \ref{thm:main} says that there exists a threshold switching curve
$ P^a = g(P^{a})$, where $g(\cdot)$ is increasing in each element of $P^{a}$, such that for 
$P^a < g(P^{-a})$ 
it is optimal to stop,
and for $P^{a} \geq g(P^{-a})$ 
 it is optimal to continue.  This is illustrated in Figure \ref{fig:threshold}. Moreover, since
$g$ is monotone, it is 
differentiable almost everywhere (by Lebesgue's theorem).
 
 \begin{figure}[h]
\centerline{\includegraphics[scale=0.3]{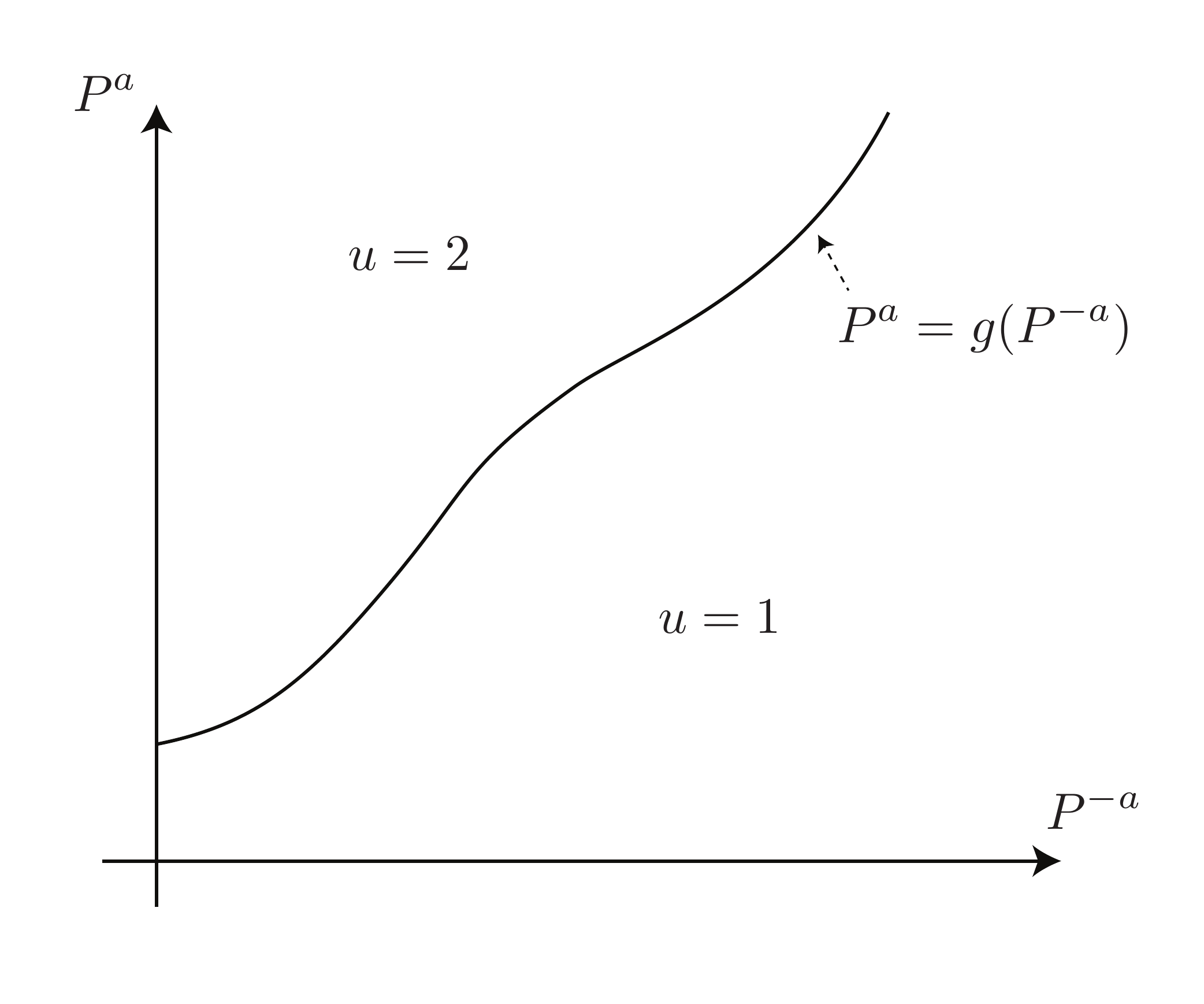}}
\caption{Threshold switching curve for optimal decision policy $\mu^*(P^a,P^{-a})$. 
Claim 2 of  Theorem \ref{thm:main} says that the optimal decision policy
is characterized by  a monotone
increasing threshold
curve $g(\cdot)$ when each target has state dimension $m=1$.}\label{fig:threshold}
\end{figure}

To prove Theorem \ref{thm:main} we will require  the following monotonicity  result regarding
the Riccati and Lyapunov equations of the Kalman covariance update. This  is proved in Appendix~\ref{sec:bitmead}. Below $\det(\cdot)$ denotes determinant.

\begin{framed}
\begin{theorem} \label{thm:bitmead}
Consider the Kalman filter Riccati covariance update,  $\R(P,z)$, defined in (\ref{eq:riccEqn}) with {possibly} missing measurements,  and Lyapunov  covariance update, $\L(P),$ defined in (\ref{eq:liapunov}).
 The  following properties hold
 for $P \in \pdf$ and $z\in \reals^{\mz}$ (where $\mz$ denotes the dimension of the observation
 vector $z$ in (\ref{eq:gauss}))
 :\\
(i)  $\frac{\text{det}(\L(P))}{\text{det}(P) }$ and (ii) $\frac{\text{det}(\R(P,z))}{\text{det}(P)}$  are
monotone decreasing in $P$
on the poset $[\pdf,\gr]$. \qed
\end{theorem}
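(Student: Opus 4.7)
The plan is to reduce both claims to explicit determinant identities whose monotonicity is visible by inspection. The key observation is that on $[\pdf,\gr]$ the map $P \mapsto P^{-1}$ is order-reversing while $\det(\cdot)$ is order-preserving on the positive-definite cone, so expressing each ratio as a monotone composition of these elementary maps will give the conclusion immediately.

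For the Lyapunov case, I would first apply Sylvester's identity $\det(I+XY)=\det(I+YX)$ together with the factorization $\det(I+PB)=\det(P)\det(P^{-1}+B)$ (valid for $P\gs 0$) to obtain
\[
\frac{\det(\L(P))}{\det(P)} \;=\; \det(Q)\,\det\bigl(P^{-1} + F^{T}Q^{-1}F\bigr).
\]
This is manifestly decreasing in $P$: if $P\gs P'$, then $P^{-1}\ls P'^{-1}$, so $P^{-1}+F^{T}Q^{-1}F \ls P'^{-1}+F^{T}Q^{-1}F$, and the determinant is monotone on the positive-definite cone. This step uses only $Q\gs 0$; no invertibility of $F$ is needed.

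For the Riccati case (when $z\neq\emptyset$), I would first use the Woodbury identity to rewrite $\R(P,z) = F(P^{-1}+H^{T}R^{-1}H)^{-1}F^{T} + Q = \L(P_{\mathrm{post}})$, where $P_{\mathrm{post}} := (P^{-1}+H^{T}R^{-1}H)^{-1}$. Then I would factor
\[
\frac{\det(\R(P,z))}{\det(P)} \;=\; \frac{\det(\L(P_{\mathrm{post}}))}{\det(P_{\mathrm{post}})} \cdot \frac{\det(P_{\mathrm{post}})}{\det(P)} \;=\; \frac{\det(\L(P_{\mathrm{post}}))}{\det(P_{\mathrm{post}})} \cdot \frac{1}{\det\bigl(I+PH^{T}R^{-1}H\bigr)},
\]
using the same Sylvester manipulation to get $\det(P_{\mathrm{post}})/\det(P) = 1/\det(I+PH^{T}R^{-1}H)$. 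Both factors are decreasing in $P$: the first because $P_{\mathrm{post}}(P)$ is increasing in $P$ (order-reverse, add a constant, order-reverse again) while the Lyapunov ratio has already been shown to be decreasing; the second because $\det(I+PH^{T}R^{-1}H) = \det(I + (H^{T}R^{-1}H)^{1/2}P(H^{T}R^{-1}H)^{1/2})$ is increasing in $P$, so its reciprocal is decreasing. A product of two positive decreasing functions is decreasing, giving (ii). The missing-measurement case $z=\emptyset$ collapses to $\R(P,\emptyset)=\L(P)$ and is already handled by (i).

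The main subtlety I anticipate is just bookkeeping: ensuring the determinant and Woodbury identities are applied with only the standing hypotheses $Q\gs 0$, $R\gs 0$, and $P\gs 0$ in force, so that no unneeded invertibility of $F$ or $H$ creeps in, and verifying that the order-reversal of the inverse is strict enough that the final composition remains monotone. Once the two displayed formulas are established, the monotonicity conclusions are immediate.
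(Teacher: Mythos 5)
Your proposal is correct and rests on the same two ingredients as the paper's proof: the Woodbury/matrix-inversion lemma to rewrite $\R(P,z)$ as $F(P^{-1}+H^{T}R^{-1}H)^{-1}F^{T}+Q$, and the determinant identity $\det(P)\det(P^{-1}+B)=\det(I+PB)$ (the paper's Schur-complement identity in Sylvester form), followed by order-reversal of the inverse and monotonicity of $\det$ on the positive-definite cone. The only difference is organizational: you factor the Riccati ratio as $\bigl[\det(\L(P_{\mathrm{post}}))/\det(P_{\mathrm{post}})\bigr]\cdot\bigl[\det(P_{\mathrm{post}})/\det(P)\bigr]$ and reuse part (i), whereas the paper multiplies the same identities out into the single closed form $\det(Q)\det(R)\det(P^{-1}+H^{T}R^{-1}H+F^{T}Q^{-1}F)/\det(R+HPH^{T})$ -- these are the same computation.
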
 \end{framed}

\noindent{\em Discussion}:
An important property of Theorem  \ref{thm:bitmead} is that  stability of  the target system matrix $F$  (see (\ref{eq:state_dyn})) is not required.  In target tracking models (such as (\ref{eq:gauss})), $F$ has  eigenvalues at 1 and is therefore not stable.
By using  Theorem  \ref{thm:bitmead},   Lemma \ref{lem:bob} (in Appendix \ref{app:prelim}) shows that
the stopping cost involving stochastic observability is a monotone function of the covariances. 
 This monotone property of the stochastic observability of a Gaussian process is  of independent interest.

 Instead of stochastic observability (which deals with log-determinants), suppose we had chosen the stopping
cost in terms of the   trace of the covariance
matrices. Then,  in general, it is not true  that $\text{trace}(\R(P,z)) - \text{trace}(P)$ is decreasing in $P$ on the poset $[\pdf,\gr]$. Such a result typically requires stability of $F$. 

\section{Parametrized Monotone Policies and Stochastic Optimization Algorithms}
\label{sec:approx}

Theorem \ref{thm:main} shows that the optimal sequential decision policy $\mu^*(P) = \arginf_{\mu \in \Mu} J_\mu(P)$
  is monotone in $P$.
Below, we characterize and compute
optimal parametrized decision policies of the form $\mu_{\theta^*}(P) = \arginf_{\theta \in \Theta} J_{\mu_\theta}(P)$ 
for the sequential detection problem formulated in Section \ref{sec:seqdetf}.
Here $\theta \in \Theta$ denotes a suitably
chosen finite dimensional parameter and $\Theta$ is a subset of Euclidean space.
%
 Any such parametrized policy $\mu_{\theta^*}(P)$  needs to capture the essential feature
of Theorem \ref{thm:main}: it needs to be   decreasing in $P^{-a}, \bP^a$ and increasing in $P^a,\bP^{-a}$.
In this section, we derive several examples of parametrized policies that satisfy this property.
We then present simulation-based adaptive filtering (stochastic approximation) algorithms to estimate these optimal  parametrized policies.  To summarize,
 instead of attempting to solve an intractable dynamic programming problem (\ref{eq:valuefcn}), we 
 exploit the monotone structure of the optimal decision policy (Theorem \ref{thm:main}) to 
estimate a parametrized optimal monotone policy (Algorithm \ref{alg1} below).

\subsection{Parametrized Decision  Policies}
Below we give several examples of parametrized decision policies for the sequential
detection problem that are monotone in the covariances. Because
such parametrized policies satisfy the conclusion of Theorem~\ref{thm:main}, they can be used to approximate the  monotone optimal
policy of the sequential detection problem.  Lemma \ref{lem:suff} below shows that the constraints we specify are necessary and sufficient for the parametrized policy to be monotone implying that such policies $\mu_{\theta^*}(P)$
are an approximation to the 
optimal policy $\mu^*(P)$ within the appropriate parametrized class $\Theta$.

First we consider 3 examples of parametrized policies that are linear in the  vector of eigenvalues $\eig$ (defined in Section
\ref{sec:existence}).
Recall that $m$ denotes the dimension of state $s$ in
(\ref{eq:gauss}).
 Let 
$ \theta^l$ and $\bt^l \in \Theta = \reals_+^m$  denote the parameter vectors that parametrize the policy
$\mu_\theta$ defined as
\begin{align} \label{eq:maxpar}
\mu_\theta(\eig^a,\eig^{-a}) =  \begin{cases} 1 \text{ (stop), } &\hskip -3mm \text{if } 
- {\theta^{a}}^\p \eig_{P^a} +  {\bt^a}^\p \eig_{\bP^{a}} + 
\max_{l\neq a} 
\left[ 
{\theta^{l}}^\p \eig_{P^{l}}  -{\bt^l}^\p \eig_{\bP^{l}}
\right] \geq 1, \\
2 \text{ (continue), } & \hskip -3mm\text{otherwise.} 
\end{cases} \end{align}

\begin{align} \label{eq:minpar}
\mu_\theta(\eig^a,\eig^{-a}) =  \begin{cases} 1 \text{ (stop), } &\hskip -3mm \text{if } 
- {\theta^{a}}^\p \eig_{P^a} +  {\bt^a}^\p \eig_{\bP^{a}} + 
\min_{l\neq a} 
\left[ 
{\theta^{l}}^\p \eig_{P^{l}}  -{\bt^l}^\p \eig_{\bP^{l}}
\right] \geq 1, \\
2 \text{ (continue), } &\hskip -3mm \text{otherwise.} 
\end{cases} \end{align}

\begin{align} \label{eq:avgpar}
\mu_\theta(\eig^a,\eig^{-a}) =  \begin{cases} 1 \text{ (stop), } &\hskip -3mm \text{if } 
- {\theta^{a}}^\p \eig_{P^a} +  {\bt^a}^\p \eig_{\bP^{a}} + 
\sum_{l\neq a} 
\left[ 
{\theta^{l}}^\p \eig_{P^{l}}  -{\bt^l}^\p \eig_{\bP^{l}}
\right] \geq 1, \\
2 \text{ (continue), } &\hskip -3mm \text{otherwise.} 
\end{cases} \end{align}

As a fourth example, consider the parametrized policy in terms of covariance matrices. Below  $ \theta^l $
and $\bt^l \in \reals^m$ are unit-norm  vectors,
i.e, ${\theta^l}^\p \theta^l = 1$ and
${\bt^l}^\p \bt^l = 1$
for $l=1,\ldots,L$. Let $\mathcal{U}$ denote the space of  unit-norm vectors. Define the parametrized policy
$\mu_\theta$, $\theta \in \Theta = \mathcal{U}$ as
\begin{align} \label{eq:covpar}
\mu_{\theta}(P^{a},P^{-a})  = \begin{cases} 1 \text{ (stop), } &\hskip -6mm \text{ if }  - {\theta^a}^\p P^a \theta^a + 
{\bt^a}^\p \bP^a \bt^a + 
\sum_{l\neq a}  {\theta^l}^\p P^l \theta^l - {\bt^l}^\p \bP^l  \bt^l \geq 1,\\
2 \text{ (continue), } &\hskip -6mm \text{ otherwise. }  
\end{cases}
\end{align}

The following lemma states that the above parametrized policies satisfy the conclusion of Theorem \ref{thm:main}
that the policies are monotone.
 The proof is straightforward and hence omitted.

\begin{lemma} \label{lem:suff}
Consider each of the parametrized policies  (\ref{eq:maxpar}), (\ref{eq:minpar}), (\ref{eq:avgpar}).
Then $\theta^l,\bt^l \in \Theta = \reals_+^m$ is necessary and sufficient
for the parametrized policy $\mu_\theta$ to be monotone increasing in $P^a,\bP^{-a}$ and decreasing in $P^{-a},\bP^a$.
For (\ref{eq:covpar}), $\theta \in \Theta = \mathcal{U}$ (unit-norm vectors) is necessary and sufficient
for the parametrized policy $\mu_\theta$ to be monotone increasing in $P^a,\bP^{-a}$ and decreasing in $P^{-a},\bP^a$. \\ \mbox{} \qed
\end{lemma}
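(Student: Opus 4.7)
The plan is to reduce each claim to two basic ingredients: first, the PSD partial order descends to sorted eigenvalue vectors, so $P\gr Q$ implies $\eig_P\og\eig_Q$ componentwise (Weyl's monotonicity theorem), and consequently $\theta^{\p}\eig_P$ is monotone increasing in $P$ on $[\pdf,\gr]$ whenever $\theta\in\reals_+^m$; second, the operators $\max$, $\min$ and $\sum$ preserve componentwise monotonicity in each of their arguments. Together with the explicit sign pattern built into each parametrized policy, these ingredients yield sufficiency at once. Necessity is then established by contrapositive using carefully chosen diagonal matrices.

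For sufficiency in (\ref{eq:maxpar})--(\ref{eq:avgpar}), let $\phi$ denote the left-hand side of the stopping inequality, viewed as a function of $(P^a,\bP^a,P^{-a},\bP^{-a})$. Under $\theta^l,\bt^l\in\reals_+^m$ I read off from the sign pattern that $\phi$ is decreasing in $P^a$ via the term $-\theta^{a\,\p}\eig_{P^a}$, increasing in $\bP^a$ via $\bt^{a\,\p}\eig_{\bP^a}$, increasing in each $P^l$ for $l\neq a$ via $\theta^{l\,\p}\eig_{P^l}$ inside the $\max$/$\min$/$\sum$ wrapper, and decreasing in each $\bP^l$ for $l\neq a$ via $-\bt^{l\,\p}\eig_{\bP^l}$ inside the wrapper; the wrappers preserve these monotonicities argument by argument. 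Since $\mu_\theta=1$ iff $\phi\geq 1$, the stopping region expands in $\bP^a,P^{-a}$ and contracts in $P^a,\bP^{-a}$, which is exactly the monotonicity stated in Theorem~\ref{thm:main}. For (\ref{eq:covpar}) sufficiency is even cleaner: the quadratic form $\theta^{\p}P\theta$ is monotone in $P$ on $[\pdf,\gr]$ for \emph{every} $\theta\in\reals^m$ because $P\gr Q$ gives $\theta^{\p}(P-Q)\theta\geq 0$, so the same sign-pattern reading applies verbatim. The unit-norm restriction $\theta\in\mathcal{U}$ plays no role in monotonicity; it fixes the scale so that the threshold~$1$ in the stopping inequality is non-degenerate and acts as an identifiability constraint for the stochastic-approximation scheme of Algorithm~\ref{alg1}.

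For necessity in the linear policies I argue by contrapositive. Suppose $\theta^a_k<0$ for some index $k$. Choose a diagonal $P^a=\mathrm{diag}(\lambda_1,\ldots,\lambda_m)$ with strictly decreasing positive entries and the rank-one perturbation $P^{a\prime}=P^a+\varepsilon e_k e_k^{\p}$ with $\varepsilon>0$ small enough that the sort order of the spectrum is preserved. Then $P^a\lr P^{a\prime}$, the only coordinate of $\eig_{P}$ to change is the $k$-th (which increases by $\varepsilon$), and $\theta^a_k<0$ makes $-\theta^{a\,\p}\eig_P$, and hence $\phi$, strictly increase. Choosing the remaining covariances (and, for the $\max$ and $\min$ policies, the argmax/argmin index) so that $\phi$ crosses the threshold~$1$ along the segment $[P^a,P^{a\prime}]$ produces $\mu_\theta(P^a,\cdot)=2>1=\mu_\theta(P^{a\prime},\cdot)$, contradicting the requirement that $\mu_\theta$ be increasing in $P^a$. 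Symmetric constructions dispose of negative components of $\bt^a$ and of the per-target vectors $\theta^l,\bt^l$ for $l\neq a$.

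The main obstacle is technical rather than conceptual, and it lives in the necessity step: because $\eig_P$ is the \emph{sorted} eigenvalue vector, the map $P\mapsto\eig_P$ need not convert matrix perturbations into componentwise eigenvalue perturbations, since perturbing $P$ can re-order its spectrum. The remedy, which I have used above, is to work in the interior of a generic sort-ordering cell (strictly decreasing spectrum), where a sufficiently small rank-one PSD perturbation of a diagonal matrix changes exactly one coordinate of the sorted spectrum without triggering any re-ordering. This localizes the construction and converts each sign-based counterexample for $\eig$ into a legitimate counterexample for the PSD statement, closing the necessity half of the lemma.
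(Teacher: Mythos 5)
The paper offers no proof of this lemma---it is declared ``straightforward and hence omitted''---so there is no argument of record to compare against; your proof supplies essentially the intended one. Sufficiency rests on exactly the two facts you isolate: Weyl monotonicity (which the paper itself invokes when it asserts $P \gr Q$ implies $\eig_P \og \eig_Q$), so that ${\theta}^\p\eig_P$ is increasing in $P$ on $[\pdf,\gr]$ whenever $\theta\in\reals_+^m$, together with the fact that $\max$, $\min$ and $\sum$ preserve componentwise monotonicity; the sign pattern of (\ref{eq:maxpar})--(\ref{eq:avgpar}) then gives the stated directions, and your device for necessity---a strictly decreasing diagonal spectrum perturbed by $\varepsilon e_k e_k^\p$ so that exactly one coordinate of the sorted eigenvalue vector moves---is the correct way to lift a coordinatewise counterexample to the positive definite order. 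Two caveats, neither fatal. First, you are right that for (\ref{eq:covpar}) the quadratic form ${\theta}^\p P\theta$ is Loewner-monotone for \emph{every} $\theta\in\reals^m$, so the ``necessary'' half of that clause cannot be literally true as a monotonicity statement; your reading of the unit-norm constraint as a normalization fixing the scale of the threshold is the only sensible one, and it is the lemma, not your proof, that overstates matters there. Second, your necessity argument for the linear policies tacitly assumes the remaining covariances can be chosen so that $\phi$ straddles the threshold $1$ along the perturbation; in degenerate corners this fails (for instance, if all parameter vectors other than $\theta^a$ vanish and $\theta^a$ has one small negative component dominated by positive ones, then $\phi<0<1$ everywhere, the policy is constantly ``continue,'' and is trivially monotone). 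The clean statement your argument actually establishes is that $\theta^l,\bt^l\in\reals_+^m$ is necessary and sufficient for each linear functional ${\theta^l}^\p\eig$ to be increasing on the poset of sorted eigenvalue vectors, which is what the parametrization is designed to enforce and is surely what the authors meant.
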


Lemma \ref{lem:suff} says  that since the constraints on the parameter vector $\theta$ are necessary and sufficient for a monotone policy, the classes of  policies
(\ref{eq:maxpar}), (\ref{eq:minpar}), (\ref{eq:avgpar}) and (\ref{eq:covpar})
 do not leave out any 
monotone policies; nor do they  include
any non monotone  policies.
 Therefore optimizing  over $\Theta$ for each case 
 yields the best   approximation to
the optimal policy within the appropriate class.

\noindent {\em Remark}: Another example of a parametrized policy that satisfies Lemma \ref{lem:suff} is obtained by replacing
$\eig_{X}$ with $\log \det(X)$  in
  (\ref{eq:maxpar}), (\ref{eq:minpar}), (\ref{eq:avgpar}). In this case, the parameters $\theta^a, \bt^a, \theta^l, \bt^l$ are scalars.
However, numerical studies (not presented here) show that this scalar parametrization is not rich enough to yield useful decision policies.

\subsection{Stochastic Approximation Algorithm to estimate $\theta^*$}
Having characterized monotone parameterized policies above,
our next goal is to compute 
 the optimal parametrized policy $\mu_{\theta^*}$ for the sequential detection problem described
 in Section \ref{sec:seqdetf}.
  This can
 be formulated as the following stochastic optimization problem:
\begin{align}
J_{\mu_{\theta^*}} &= \inf_{\theta \in \Theta}  J_\theta(P^a,\bP^a,P^{-a},\bP^{-a}), \nonumber
\\ 
\text{ where } 
J_\theta(P^a,\bP^a,P^{-a},\bP^{-a}) &= \Ept\{ (\tau-1) \cop
+  \Cb(P^a_\tau,\bP^a_\tau,P^{-a}_\tau,\bP^{-a}_\tau
\,| P_0=P,\bP_0 = \bP \}.\label{eq:simobj}
\end{align}
Recall that $\tau$ is the stopping time at which stop action $u=1$ is applied, i.e. $\tau = \inf\{k:u_k = 1\}$.
  
The optimal parameter $\theta^*$ in
(\ref{eq:simobj}) can be computed by simulation-based stochastic optimization algorithms as we now describe.  
Recall that for the first three examples above (namely, (\ref{eq:maxpar}), (\ref{eq:minpar})
and (\ref{eq:avgpar})), there is the explicit constraint that
  $\theta^l$ and $\bt^l \in \Theta = \reals^m_+$. This constraint  can be eliminated
straightforwardly by
choosing each component of $\theta^l$ as {$\theta^l(i)=\left[\phi^l(i)\right]^2$}
where $\phi^l(i) \in \reals$. 
The optimization problem (\ref{eq:simobj}) can then be formulated in
 terms of this new {unconstrained} parameter vector $\phi^l \in \reals^m$.

 In the fourth example above, namely (\ref{eq:covpar}), the parameter  $\theta^l$ is constrained
 to the boundary set of the $m$-dimensional unit hypersphere $\mathcal{U}$.
This constraint can be eliminated by parametrizing $\theta^l$ in terms of
 spherical coordinates $\phi$ as follows: Let    \beq
   \theta^l(1) = \cos \phi^l(1), \; \theta^l(i) = \prod_{j=1}^{i-1} \sin \phi^l(j) \cos \phi^l(i), \;
   i=2,\ldots, m-1, \quad \theta^l(m) = \prod_{j=1}^m \sin \phi^l(j). 
   \eeq
where $\phi^l(i) \in \reals $, $i=1,\ldots,m$ denote a parametrization of $\theta$. Then it is
trivially verified that {$\theta^l\in\mathcal U$}. Again the optimization problem (\ref{eq:simobj}) can then be formulated in
terms of this new unconstrained parameter vector $\phi^l\in \reals^m$.
   
\begin{algorithm}[h]
\caption{Policy Gradient Algorithm for computing optimal parametrized  policy} \label{alg1}
Step 1: Choose initial threshold coefficients  $\phi_{0}$ and parametrized policy
$\mu_{\theta_{0}}$.
\\
Step 2: For iterations $n=0,1,2,\ldots$ 
\begin{itemize}
\item Evaluate sample cost 
$
\hat{J}_n(\mu_{\phi}) = (\tau-1) \cop
+  \Cb(P^a_\tau,\bP^a_\tau, P^{-a}_\tau, \bP^{-a}_\tau)
$.\\
Compute gradient estimate $\nablat \hat{J}_n(\mu_{\phi})$ as:
$$ \nablat J_n = \frac{\displaystyle
J_n({\phi_n+\omega_n \d_n}) - J_n({\phi_n-\omega_n \d_n})}{\displaystyle 2
\omega_n } \d_n, \quad 
\d_n(i) = \begin{cases}
-1, & \text{ with probability } 0.5, \\
+1, & \text{with probability } 0.5.\end{cases}
 $$ 
 Here $\omega_n  = \frac{\omega}{{(n+1)}^\gamma}$ denotes the gradient step size with $0.5 \leq \gamma \leq 1$ and {$\omega>0$}.

\item Update threshold coefficients  $\phi_n$  via 
(where $\epsilon_n$ below denotes step size)
\beq \label{eq:sa}
\phi_{n+1} = \phi_n - \epsilon_{n+1} \nablat  \hat{J}_n(\mu_{\phi}), \quad
\epsilon_n = \epsilon/(n+1+s)^\zeta, \quad 0.5 < \zeta \leq  1, \;
\text{ and }\epsilon , s > 0. \eeq
\end{itemize} \label{alg:spsa}
\end{algorithm}

  Several possible simulation based stochastic approximation algorithms can be used to estimate $\mu_{\theta^*}$ in (\ref{eq:simobj}). In our numerical examples, we used Algorithm \ref{alg1}  to estimate the optimal parametrized policy.   Algorithm \ref{alg1} is a 
 Simultaneous Perturbation Stochastic Approximation (SPSA) algorithm
\cite{Spa03};  see \cite{Pfl96} for other more sophisticated
gradient estimators. Algorithm \ref{alg1}
generates a sequence of estimates $\phi_{n}$ and thus $\theta_n$,
 $n=1,2,\ldots,$ that
converges to a local minimum $\theta^*$  of (\ref{eq:simobj}) with
policy $ \mu_{\theta^*}(P) $. In Algorithm \ref{alg1} we denote the policy as $\mu_\phi$ since $\theta$ is 
parametrized in terms of $\phi$ as described above.

The SPSA algorithm \cite{Spa03} 
picks a single
random direction $\d_n$ (see Step 2) along which  the derivative is
evaluated {after} each batch $n$.  As is apparent  from Step 2 of
Algorithm \ref{alg1}, 
 evaluation of the gradient estimate $\nablat J_n$ 
requires only 2 batch simulations. This is unlike the well known Kiefer-Wolfowitz stochastic approximation
algorithm \cite{Spa03} where $2 m$ batch simulations are required to evaluate the gradient
estimate.
Since  the stochastic gradient algorithm
(\ref{eq:sa}) converges to  a local optimum, it is necessary
to {retry with} several {distinct} initial conditions. 


\section{Application: GMTI Radar Scheduling and Numerical Results} \label{sec:numerical}

This section  illustrates the  performance of the monotone  parametrized policy (\ref{eq:simobj}) computed via Algorithm \ref{alg1} in
a GMTI radar scheduling  problem.
We first  show that the nonlinear measurement model of a GMTI tracker
can be approximated satisfactorily by the linear Gaussian model (\ref{eq:gauss})
that was used above. Therefore the main result Theorem \ref{thm:main} applies, implying that the optimal
radar micro-management
decision policy is monotone.
To 
illustrate these micro-management policies numerically, we then consider two important GMTI surveillance problems -- the target fly-by problem and the persistent surveillance problem.

\subsection{GMTI Kinematic Model and Justification of Linearized Model (\ref{eq:gauss})} \label{sec:appgmti}

 The observation model below is an abstraction based on approximating
several underlying pre-processing steps. For example,  given raw GMTI measurements,   space-time adaptive
processing (STAP) (which is a two-dimensional adaptive filter)  is used for near real-time detection,
see 
\cite{BDW10} and references therein. Similar observation models can be used as abstractions of 
 synthetic aperture radar (SAR) based processing.

 A modern GMTI radar manager operates on three time-scales (The description below is a 
{simplified variant} of an actual radar system.):
\begin{itemize}
\item Individual observations of target $l$ are obtained on the fast time-scale $t=1,2,\ldots$. The period at which  $t$ ticks is 
 typically $1$ milli-second. At this time-scale, ground targets can be considered to be static.
\item Decision epoch $k=1,2,\ldots,\tau$ is the time-scale at which the micro-manager and target tracker operate. 
Recall $\tau$ is the stopping time at which the  micro-manager decides to stop
and return control to the macro-manager.
The clock-period at which $k$ ticks is typically  $T= 0.1$  seconds. 
 At this epoch time-scale $k$, the targets move according to the kinematic model (\ref{eq:state_dyn}), (\ref{eq:obs}) 
below.  Each
epoch $k$ {is} comprised of intervals $t=1,2,\ldots, \Delta$ of the fast time-scale, where $\Delta$ is typically of the order of 100. So,
100 observations are integrated at the $t$-time-scale to yield a single observation at the $k$-time-scale.


\item The scheduling interval $n=1,2\ldots,$ is the time-scale at which the macro-manager operates.
Each scheduling interval $n$ is comprised  of $\tau_n$ decision epochs. This stopping time $\tau_n$ is determined by the micro-manager.
$\tau_n$ is typically  in the range 10 to  50  -- in absolute time it corresponds to the range 1 to 5  seconds. In such a time period, a ground target
moving at 50 km per hour moves approximately in the range 14 to 70 meters.
\end{itemize}

\subsubsection{GMTI Kinematic Model} \label{sec:gmtikinematic}
 The tracker assumes that each target $l \in \{1,\ldots,L\}$ has kinematic model  and GMTI observations \cite{BP99},
\begin{align}
\label{eq:state_dyn} 
	s^{l}_{k+1} & = Fs^{l}_{k} + Gw^{l}_{k},\\
	z^{l}_{k} & = \begin{cases} h(s^{l}_{k},\plat_k) + \frac{1}{\sqrt{\price^l \Delta}}v^{l}_{k},  &  \text{ with probability $p_d^l$,}\\
						\emptyset, &  \text{ with probability $1- p_d^l$.}  \end{cases}
	\label{eq:obs}
\end{align}
Here $z^{l}_{k}$ denotes a 3-dimensional 
(range, bearing and range rate)
observation vector of target $l$ at epoch time $k$ and $\plat_k$ denotes the Cartesian coordinates and speed of the platform (aircraft)
on which the GMTI radar is mounted.
 The noise processes
$w^{l}_{k}$ and $v^{l}_{k}/\sqrt{\price^l \Delta}$ are zero-mean Gaussian random vectors with covariance matrices $ Q^{l} $ and $ R^{l}(\price^l) $, respectively. 
 The observation $z_k$ in  decision epoch $k$ is the average of the $\price^l\Delta$ measurements obtained at the fast 
 time scale $t$.
Thus the observation noise variance in (\ref{eq:obs}) is scaled by the reciprocal of the target
priority $\price^l \Delta$.
In (\ref{eq:state_dyn}), (\ref{eq:obs}) for a GMTI system,
 \begin{align}
\label{eq:system_par} 
 F &= \left[ \begin{array}{cccc}
1 & T & 0 & 0 \\
0 & 1 & 0 & 0 \\
0 & 0 & 1 & T \\
0 & 0 & 0 & 1 \end{array} \right], \quad
G = \begin{bmatrix}  T^2/2 & 0 \\ T & 0 \\ 0 & T^2/2 \\ 0 & T \end{bmatrix},\quad
R(\price^l) = \frac{1}{{\price^l \Delta}} \left[ \begin{array}{ccc}
\sigma_{r}^{2} & 0 & 0 \\
0 & \sigma_{a}^{2} & 0 \\
0 &  0 & \sigma_{\dot{r}}^{2} \end{array} \right],\\
Q &= \left[ \begin{array}{cccc}
\frac{1}{4}T^{4}\sigma_{x}^{2} & \frac{1}{2}T^{3}\sigma_{x}^{2} & 0 & 0 \\
\frac{1}{2}T^{3}\sigma_{x}^{2} & T^{2}\sigma_{x}^{2} & 0 & 0 \\
0 & 0 & \frac{1}{4}T^{4}\sigma_{y}^{2} & \frac{1}{2}T^{3}\sigma_{y}^{2} \\
0 & 0 & \frac{1}{2}T^{3}\sigma_{y}^{2} & T^{2}\sigma_{y}^{2} \end{array} \right],\quad
h(s,\plat) = \left[ \begin{array}{c}
\sqrt{(x-\plat_x)^2+(y-\plat_y)^2+\plat_z^2} \\
\arctan{(\frac{y-\plat_y}{x-\plat_x})} \\
\frac{(x-\plat_x)(\dot{x}-\dot{\plat}_x)+(y-\plat_y)(\dot{y}-\dot{\plat}_y)}{\sqrt{(x-\plat_x)^2+(y-\plat_y)^2+ \plat_z^2}} \end{array} \right]. \nonumber
\end{align}
Recall that $T$ is typically 0.1 seconds.
The elements of $h(s,\plat)$ correspond to
 range, azimuth, and range rate, respectively. Also $\plat = (\plat_x,\dot{\plat}_x, \plat_y,\dot{\plat}_y)$ denotes the $x$ and $y$  position and speeds, respectively, and $\plat_z$ denotes the altitude, assumed to be constant, of the aircraft on which the GMTI radar is mounted.

\subsubsection{Approximation by Linear Gaussian State Space Model}
Starting with the nonlinear state space model (\ref{eq:state_dyn}), the aim below is to justify the use of the linearized model (\ref{eq:gauss}).
We start with  linearizing the  model (\ref{eq:state_dyn})  as follows; see \cite[Chapter 8.3]{Jaz70}. For each target $l$, consider a nominal deterministic
target  trajectory $\bar{s}_k^l$ and nominal measurement $\bar{z}_k^l$ where
$ \bs_{k+1}^l = F \bs_k^l$ , 
$\bar{z}_k^l = h(\bs_k^l,\plat)$.
Defining $\tilde{s}_k^l = s_k ^l- \bs_k^l$ and $\tilde{z}_k^l = z_k^l - \bar{z}_k^l$, a first order Taylor series expansion  around this nominal trajectory yields,
\begin{align}
\tilde{s}_{k+1}^l  &= F \tilde{s}_k^l + G w_k^l, \nonumber\\
\tilde{z}_{k}^l &= \begin{cases} \nabla_s h(\bs_k^l,\plat_k) \tilde{s}_k^l + R_1(s_k,\bs_k^l,\plat_k) + \frac{1}
{\sqrt{{\price^l \Delta}}}  v_k^l ,   &  \text{ with probability $p_d^l$,}\\
\emptyset, &  \text{ with probability $1- p_d^l$,}  \end{cases} \label{eq:heq}
\end{align}
where $\|R_1(s^l,\bs^l,\plat) \|  \leq  \frac{1}{2}\|(s^l-\bs^l)^\p \nabla^{2}h(\zeta,\plat)(s^l-\bs^l)\| $ and $\zeta = \gamma s^l + (1-\gamma) \bs^l $ for some $\gamma\in [0,1]$.
In the above equation,  $\nabla_s h(s,\plat) $ is the Jacobian matrix defined as (for simplicity we omit the superscript $l$ for target $l$),
\begin{align}
 \nabla_s h(s,\plat)_{ij} &
= \left[ \begin{array}{cccc}
\frac{\xr}{r} & 0 & \frac{\yr}{r} & 0 \\
\frac{-\yr}{\xr^2+\yr^2} & 0 & \frac{\xr}{\xr^2+\yr^2} & 0 \\
\frac{\dot{\xr}}{r} - \frac{\xr\yr\dot{\yr} + \xr^2\dot{\xr}}{r^{3}} & \frac{\xr}{r} & \frac{
\dot{\yr}}{r} - \frac{\xr\yr\dot{\xr} + \yr^2\dot{\yr}}{r^{3}}  & \frac{\yr}{r} \end{array} \right]
,\quad r = \sqrt{\xr^2+\yr^2+ \zr^2}. \label{eq:jacobian}
\end{align}
where
$\xr = x - \plat_x$, $\yr = y - \plat_y$ denotes the relative position of the target with respect to the platform
and $\dot{\xr}$, $\dot{\yr}$ denote the relative velocities. Since the target is ground based and the platform
is constant altitude, $\plat_z$ is a constant.

In (\ref{eq:heq}), $\nabla^2 h(\cdot,\cdot)$ denotes the $3\times 4\times 4$ Hessian tensor. By evaluating this Hessian tensor
for typical operating modes  and 
$k \leq 50$,  we show below  that
\beq \frac{\|R_1(s^l,\bs^l,\plat) \|} {\| \nabla_s h(\bs^l,\plat) \tilde s^l\|}  \leq 0.02, \quad  
 \frac{\|\nabla_s h(\bs_k^l,\plat_k)  - \nabla_s h(\bs_0^l,\plat_0)\|} {   \|\nabla_s h(\bs_k^l,\plat_k)\|} \leq 0.06. \label{eq:dedef}
\eeq
The first inequality above says that the model is approximately linear in the sense that the 
ratio of linearization error $R_1(\cdot)$ to linear term is small; the second inequality says that the model is approximately time-invariant, in the sense that the relative magnitude of the error between linearizing around $\bs_0$ and $\bs_k$
is small.
Therefore, on the micro-manager time scale,  the target dynamics can be viewed as a linear time invariant  state space model
(\ref{eq:gauss}). 

{\em Justification of (\ref{eq:dedef})}:
Using typical GMTI operating parameters, we  evaluate the bounds in (\ref{eq:dedef}).
Denote the state of the platform (aircraft) on which the radar is situated as
$
p_0=[p_{x,0},\dot{p}_x,p_{y,0},\dot{p}_y] = [-35000\text{m},100\text{m/s},-15000\text{m},20\text{m/s}]
$.
Then the platform height is
$
p_z = \sqrt{p_{x,0}^2+p_{y,0}^2}\tan{\theta_d}$,
where $\theta_d$ is the depression angle, typically between $10^\circ$ to $25^\circ$. We  assume a depression angle of $\theta_d = 15^\circ$ below yielding 
$p_z
= 10203.2 \text{m}$.
Next, consider typical behaviour of ground targets with
speed  15m/s (54 km/h) and select the following significantly different initial target state vectors
(denoted by superscripts $a-e$)
\begin{align}
s_0^a &= \begin{bmatrix}
100 & 3 & 40 & 7 \end{bmatrix},\;
s_0^b = \begin{bmatrix}
-20 & -4 & 200 & 1 \end{bmatrix},\;
s_0^c = \begin{bmatrix}
50 & 2 & 95 & 10 \end{bmatrix},\\
s_0^d &= \left[ \begin{array}{cccc}
-70 & 5 & -50 & -6 \end{array} \right],\;
s_0^e = \begin{bmatrix}
150 & -15 & 10 & 0 \end{bmatrix}.
\end{align}
Now, propagate these initial states using the  target model with $ T = 0.1$s, $\sigma_x = \sigma_y = 0.5$, $\sigma_r = 20$m, $\sigma_{\dot{r}} = 5$m/s, $\sigma_a = 0.5^\circ$ with a true track variability parameter $\sigma_p = 1.5$ (used for true track simulation as $\sigma_x$ and $\sigma_y$). 
Define (see (\ref{eq:dedef}) and recall that $\zeta = \gamma s + (1-\gamma) \bs $ for some $\gamma\in [0,1]$)
$$
D(\bs_k,\plat_k) \equiv \frac{\|\nabla_s h(\bs_k,\plat_k)  - \nabla_s h(\bs_0,\plat_0)\|} {   \|\nabla_s h(\bs_k,\plat_k)\|},
\quad E(s,\bs,\plat,\gamma) =\frac{ \frac{1}{2}\|(s-\bs)^\p D^{2}h(\zeta,\plat)(s-\bs)\| }
{\| \nabla_s h(\bs,\plat) (s - \bs)\|} .
$$
Tables \ref{tab1} to \ref{tab3} show how $D(\cdot)$ and $E(\cdot)$  evolve with iteration $k=10, 50, 100$.
The entries in the tables are small, thereby justifying
the linear time invariant state space model (\ref{eq:gauss}).

{\em Remark}: Since a linear Gaussian model is an accurate approximate model, most real GMTI trackers
use an extended Kalman filter. Approximate nonlinear filtering methods such as 
sequential Markov Chain Monte-Carlo methods (particle filters) are not
required. 

\begin{table}[p]
\centering
\begin{tabular}{|c||c|c|c|}\hline
 & $D(\bs_{10},\plat_{10})$ & $D(\bs_{50},\plat_{50})$ & $D(\bs_{100},\plat_{100})$ \\ \hline
${s}_0^a$ & 0.0010 & 0.0052 & 0.0104 \\ \hline
${s}_0^b$ & 0.0009 & 0.0049 & 0.0104 \\ \hline
${s}_0^c$ & 0.0010 & 0.0059 & 0.0119 \\ \hline
${s}_0^d$ & 0.0007 & 0.0040 & 0.0080 \\ \hline
${s}_0^e$ & 0.0010 & 0.0053 & 0.0112 \\ \hline
\end{tabular}\caption{Rate of change of Jacobian for various running times.}\label{tab1}
\end{table}
\begin{table}[p]
\centering
\begin{tabular}{|c||c|c|c|}\hline
 & $E(s_{10},\bs_{10},\plat_{10},0.1)$ & $E(s_{50},\bs_{50},\plat_{50},0.1)$& $E(s_{100},\bs_{100},\plat_{100},0.1)$ \\ \hline
${s}_0^a$ & 0.00019091 & 0.0010597 & 0.01395 \\ \hline 
${s}_0^b$ & 0.00020866 & 0.0011699 & 0.014375 \\ \hline 
${s}_0^c$ & 0.00019165 & 0.0010813 & 0.01453 \\ \hline 
${s}_0^d$ & 0.0002008 & 0.0011065 & 0.011844 \\ \hline 
${s}_0^e$ & 0.0002294 & 0.0012735 & 0.015946 \\ \hline
\end{tabular}\caption{Ratio of second-order to first-order term of Taylor series expansion for $\alpha = 0.1$.} \label{tab2}
\end{table}

\begin{table}[p]
\centering
\begin{tabular}{|c||c|c|c|}\hline
 & $E(s_{10},\bs_{10},\plat_{10},0.8)$ & $E(s_{50},\bs_{50},\plat_{50},0.8)$& $E(s_{100},\bs_{100},\plat_{100},0.8)$ \\ \hline
${s}_0^a$ & 0.00019267 & 0.0011104 & 0.014211 \\ \hline 
${s}_0^b$ & 0.00021104 & 0.0012164 & 0.014633 \\ \hline 
${s}_0^c$ & 0.00019447 & 0.0011228 & 0.014838 \\ \hline 
${s}_0^d$ & 0.00020148 & 0.0011386 & 0.012178 \\ \hline 
${s}_0^e$ & 0.00023083 & 0.0013596 & 0.016603 \\ \hline 
\end{tabular}\caption{Ratio of second-order to first-order term of Taylor series expansion for $\alpha = 0.8$.} \label{tab3}
\end{table}

\subsection{Numerical Example 1: Target Fly-by}\label{sec:flyby}
With the above justification of the model (\ref{eq:gauss}), 
we present the first numerical example.
Consider $L=4$ ground targets that are tracked by a GMTI  platform, as illustrated in Figure \ref{fig:sim}. The nominal range from the GMTI sensor to the target region is approximately $\tilde{r} = 30$km.
For this example, the initial (at the start of the micro-manager cycle) estimated and true target states of the four targets are given
in Table \ref{tab:init}.
\begin{table}[p]
\begin{center}
\begin{tabular}{ll}
$\hat{s}_0^1 = [130,5.5,84,8.1]^\mathrm{T}$, & $s_0^1 = [100,3,40,7]^\mathrm{T}$\\
$\hat{s}_0^2 = [-47.88,-2.38,210.41,0.418]^\mathrm{T}$, & $s_0^2 = [-20,-4,200,1]^\mathrm{T}$\\
$\hat{s}_0^3 = [55.84,2.37,121.74,9.56]^\mathrm{T}$, & $s_0^3 = [50,2,95,10]^\mathrm{T}$\\
$\hat{s}_0^4 = [-55.13,5.75,-68.41,-6.10]^\mathrm{T}$, & $s_0^4 = [-70,5,-50,-6]^\mathrm{T}$
\end{tabular}
\end{center}
\caption{Initial Target States and Estimates.} \label{tab:init}
\label{default}
\end{table}

\begin{figure}[p]
\centerline{\includegraphics[scale=0.4]{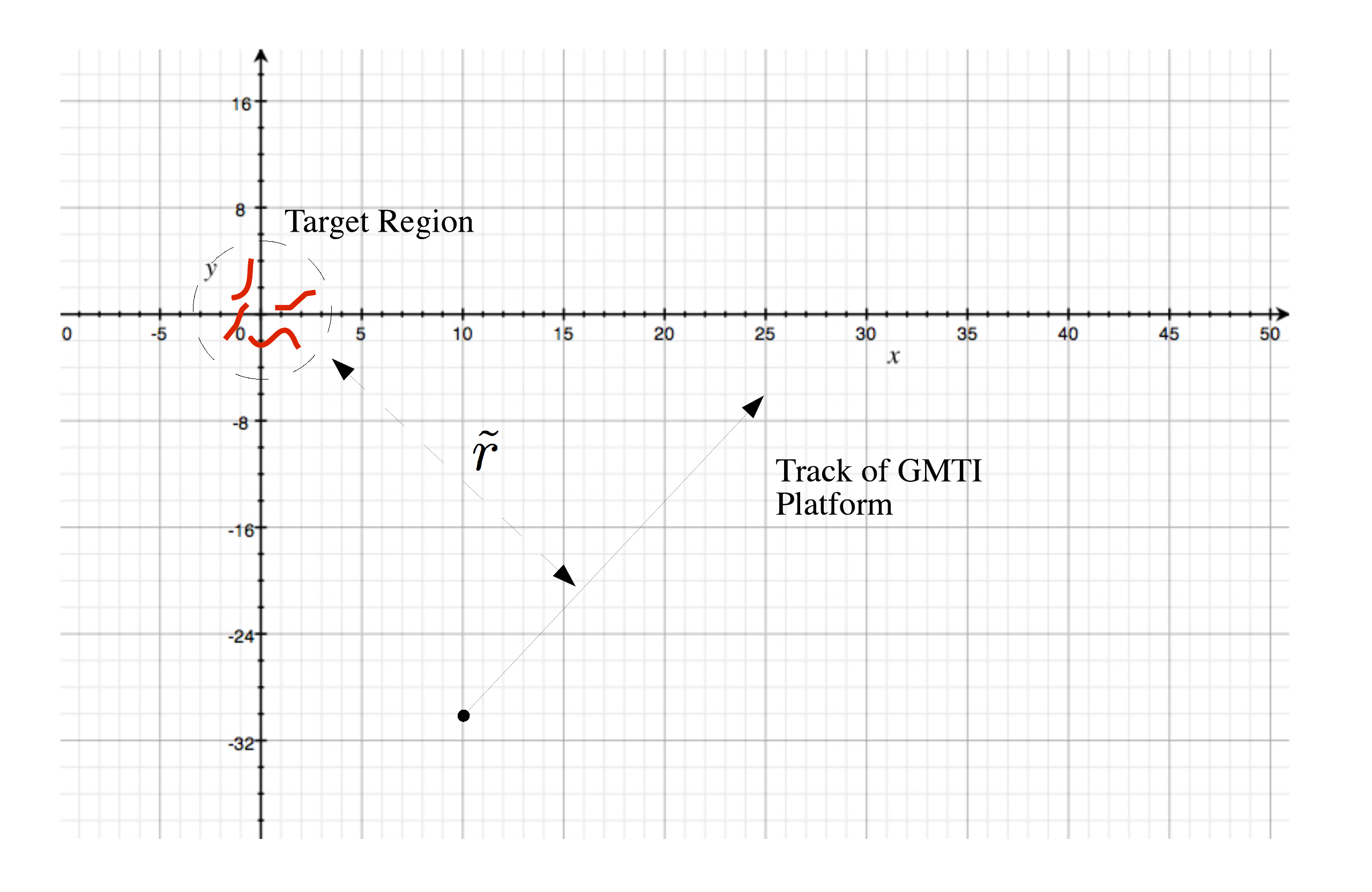}}
\caption{Target Fly-by Scenario. The GMTI platform (aircraft) moves with constant altitude and
velocity at nominal range $r=30$ km from the target region. ($r$ is defined in (\ref{eq:jacobian})).
Initial states of the four targets are specified in Table \ref{default}.}\label{fig:sim}
\end{figure}

We assume in this example that the most uncertain target is regarded as being the highest priority. Based on the initial states and estimates in Table \ref{default}, the mean square error values are, MSE$(\hat{s}_0^1) = 710.87$, MSE$(\hat{s}_0^2) = 222.16$, MSE$(\hat{s}_0^3) = 187.37$, and MSE$(\hat{s}_0^4) = 140.15$. Thus, target $l=1$ is the most uncertain and allocated the highest priority. So we denote $a=1$.

The simulation parameters are as follows: sampling time $T=0.1$s (see 
Section \ref{sec:gmtikinematic}); probability of detection $p_d = 0.75$ (for all targets, so superscript $l$ is omitted); track
standard deviations of target model $\sigma_x = \sigma_y = 0.5$m; measurement noise standard
deviations $\sigma_r = 20$m, $\sigma_a = 0.5^\circ$, $\sigma_{\dot{r}} = 5$m/s; and platform states $[p_x,\dot{p}_x,p_y,\dot{p}_y] = [10\text{km},53\text{m/s},-30\text{km},85\text{m/s}]$. We assume a target priority vector of $\price = [\price^1,\price^2,
\price^3,\price^4] = [0.6, 0.39, 0.008, 0.002]$.
Recall from (\ref{eq:obs})
 that the target priority scales the inverse of the covariance of the observation noise. 
 We chose an operating cost of $\cop = 0.8$, and the stopping cost of $\bar{C}(P^{-a})$
 specified in  (\ref{eq:avgp}), with constants $\alpha^{1,\ldots,4} = \beta^{2,\ldots,4} = 0.05$, $\beta^1 = 5$. The parametrized policy chosen for this example was $\mu_{\theta}(P^a,P^{-a})$ defined in (\ref{eq:covpar}).
We used the SPSA algorithm (Algorithm \ref{alg1}) 
to estimate the  parameter $\theta^*$ that optimizes the objective (\ref{eq:simobj}).
Since the SPSA converges to a local minimum, several initial conditions were evaluated via a  random search.

Figure \ref{fig:cost_vs_pd_cop} explores the sensitivity of the sample-path cost (achieved by the parametrized policy) 
with respect to probability of detection, $p_d$, and the operating cost, $\cop$. The sample-path cost increases with $\cop$ and decreases with $p_d$. Larger values of the operating cost, $\cop$, cause the radar micro-manager to specify the ``stop'' action sooner than for lower values of $\cop$. As can be seen in the figure, neither the sample-path cost or the average stopping time is particularly sensitive to changes in the probability of detection. However, as expected, varying the operating cost has a large effect on both the sample-path cost and the associated average stopping time.

\begin{figure}[p]
\centerline{\includegraphics[scale=0.45]{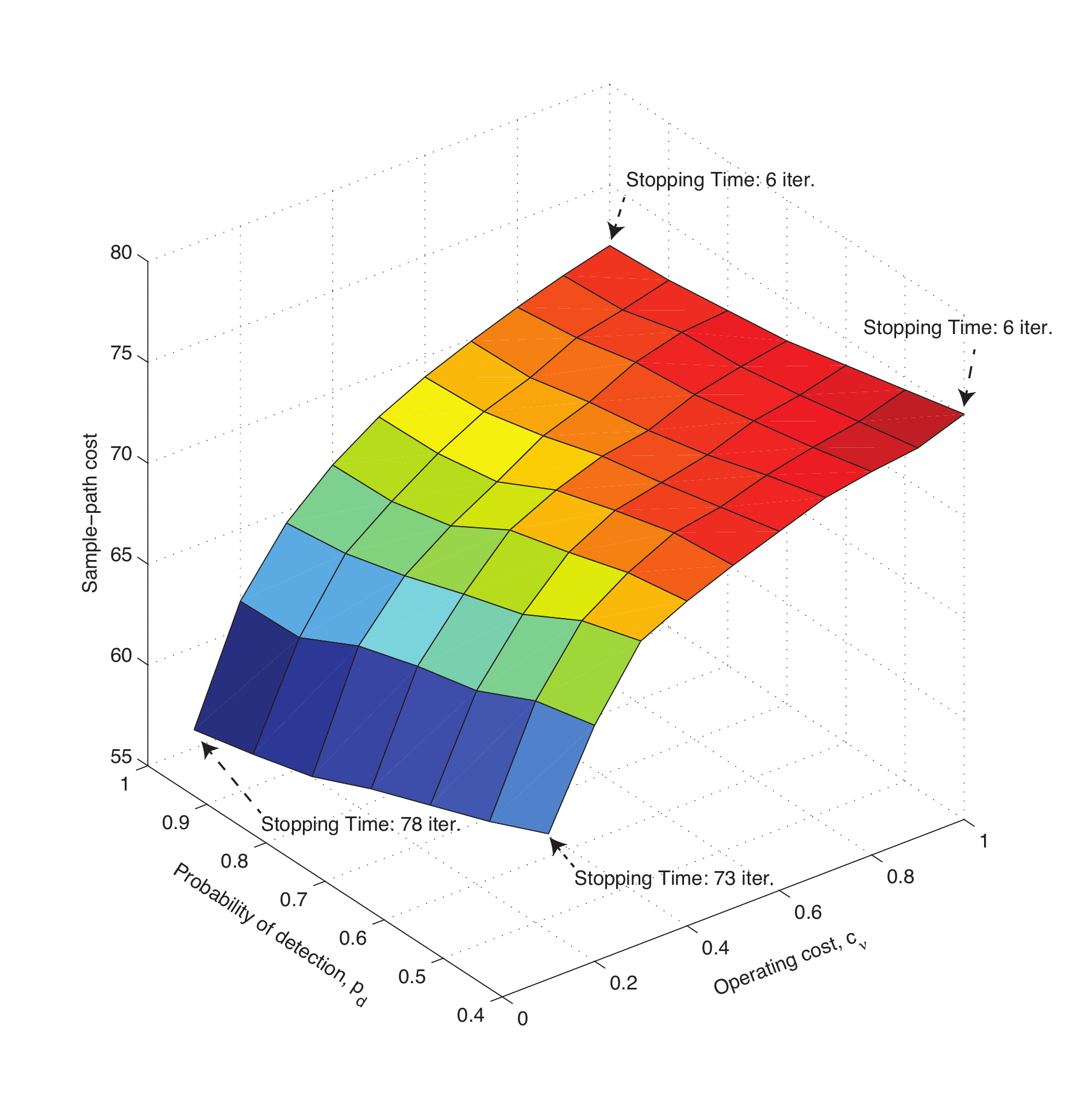}}
\caption{Dependence of the sample-path cost achieved  by the parametrized policy on the probability of detection, $p_d$, and the operating cost, $\cop$. The sample-path cost increases with the operating cost, but decreases with the probability of detection. Note the stopping times associated with the labelled vertices above.}\label{fig:cost_vs_pd_cop}
\end{figure}

Figure \ref{fig:cost_vs_IC} compares the optimal parametrized policy with periodic myopic policies. Such periodic myopic  policies 
stop at a deterministic pre-specified time (without considering state information) and then return control  to the macro-manager. 
The performance of the optimal parametrized policy is measured using multiple initial conditions. 
 As seen in Figure \ref{fig:cost_vs_IC}, the optimal parametrized policy is the lower envelope of all possible periodic stopping times, for each initial condition.
The optimal periodic policy is highly dependent upon the initial condition. The main performance advantage of the optimal parametrized policy is that it achieves virtually the same cost as the optimal periodic policy for any initial condition.

\begin{figure}[t]
\mbox{\subfigure[Sample-path cost]
{\includegraphics[scale=0.4]{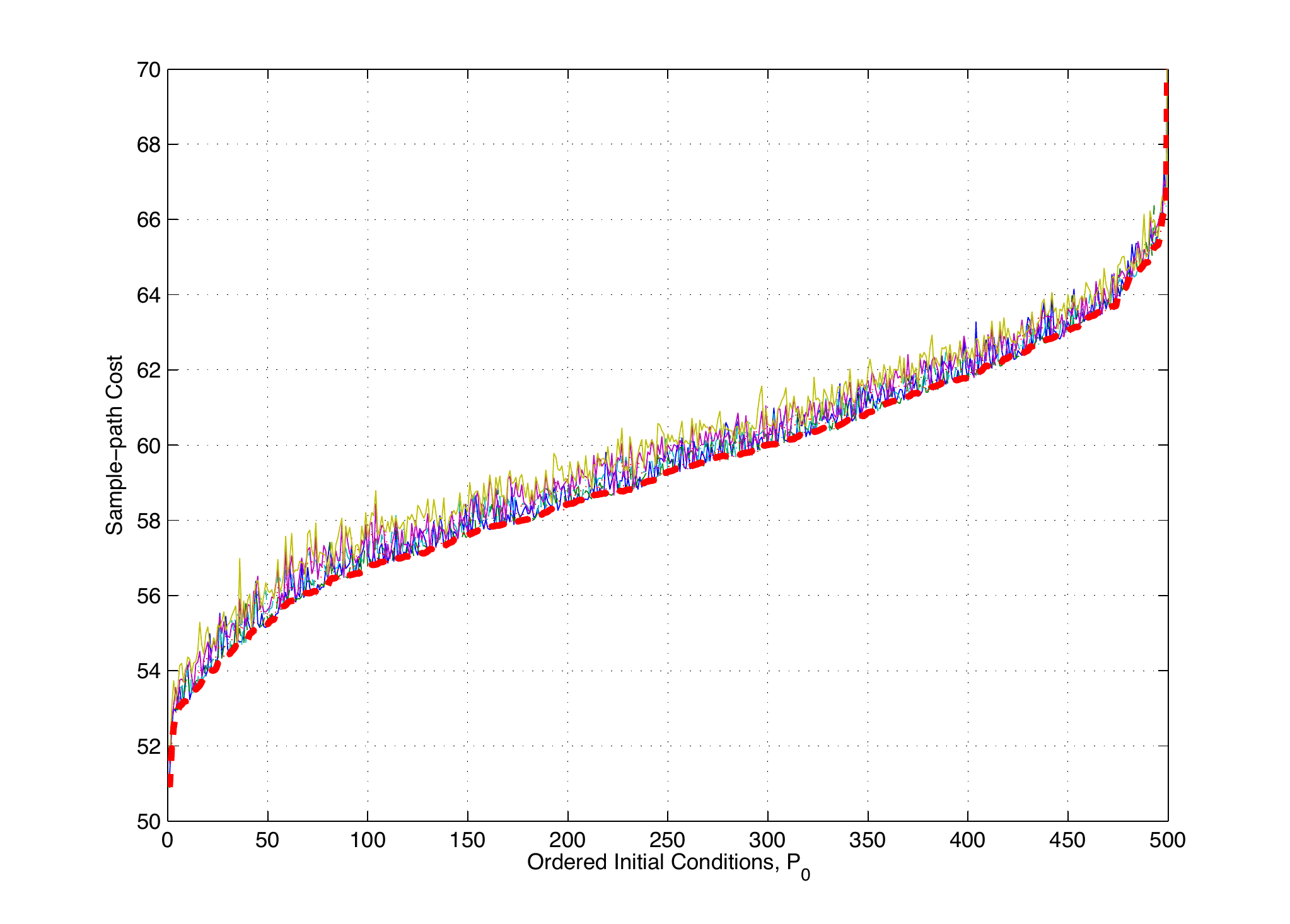}} \quad
{\subfigure[Magnified region]{\includegraphics[scale=0.4]{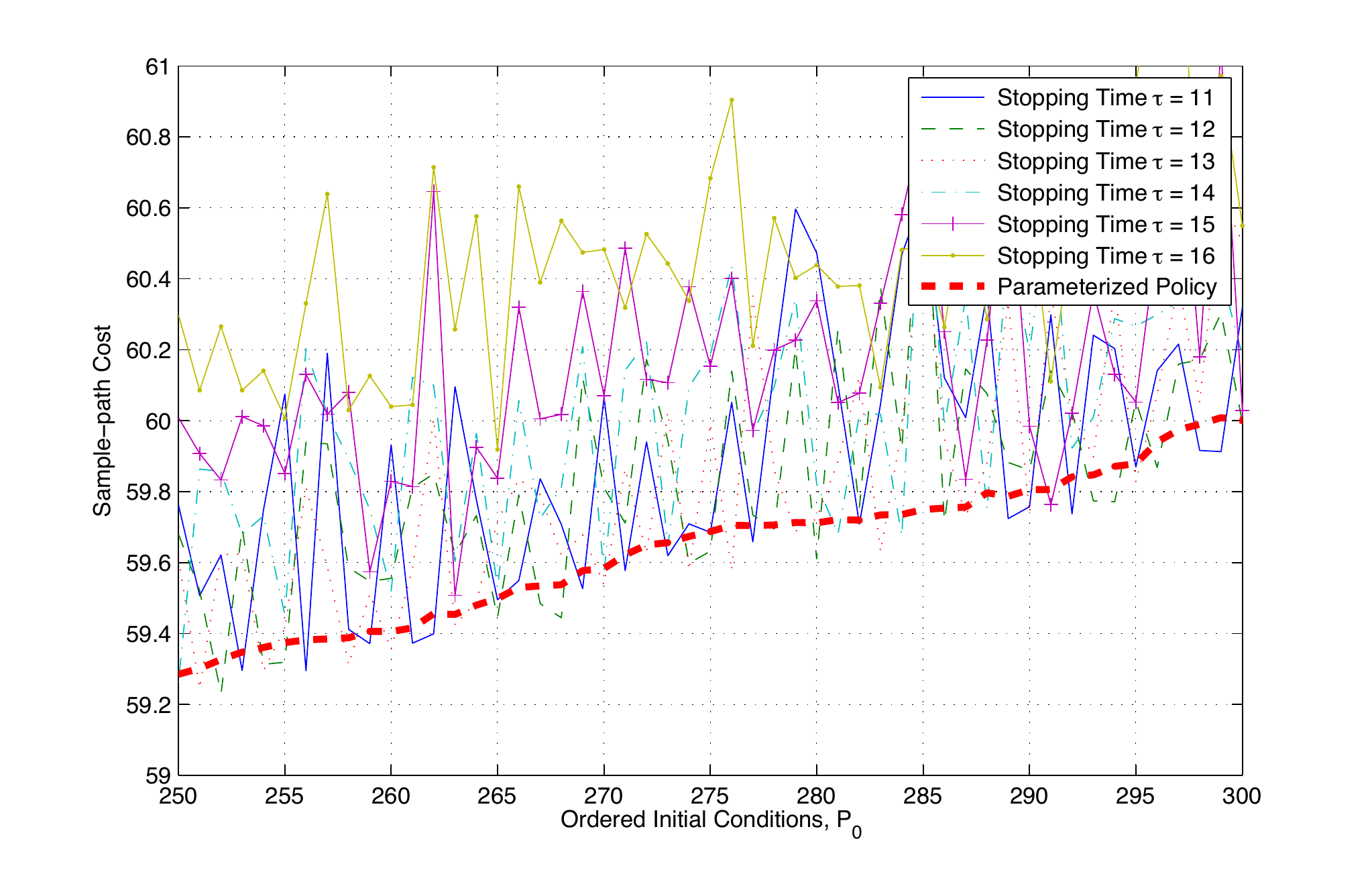}}}}
\caption{Plot of sample-path cost of  periodic policies and the parametrized policy (thick-dashed line) versus 
 initial conditions. These initial conditions are ordered with respect to the cost achieved using the 
 parametrized policy for that particular initial condition. Notice that the sample-path cost is the lower envelope of all deterministic stopping times for any initial condition.}\label{fig:cost_vs_IC}\end{figure}


\subsection{Numerical Example 2: Persistent Surveillance}
As mentioned in Section \ref{sec:intro},
persistent surveillance involves exhaustive surveillance of a region over long time intervals, typically over the period of several hours or weeks \cite{RHL07} and is useful in  providing critical, long-term battlefield information.
 Figure \ref{fig:persist} illustrates the persistent surveillance setup.
Here $\tilde{r}$ is the nominal range from the target region to the GMTI platform, assumed in our simulations to be approximately 30km. The points on the GMTI {platform} track labeled (1) -- (72) correspond to locations\footnote{The platform state at location $n\in\{1,2,...,72\}$ is defined as $p = [\tilde{r}\cos (n\cdot 5^\circ),-\tilde{v}\sin (n\cdot 5^\circ),\tilde{r}\sin (n\cdot 5^\circ),\tilde{v}\cos (n\cdot 5^\circ)]$} where we evaluate the Jacobian (\ref{eq:jacobian}).  Assume a constant platform orbit speed of 250m/s (or approximately 900km/h \cite{USAF2007}) and a constant altitude of approximately 5000m. Assuming 72 divisions along the 30km radius orbit, the platform sensor takes 10.4 seconds to travel between the track segments.  Using a similar analysis to the Appendix, the measurement model changes less than 5\% in $l_2$-norm in 10.4s,  thus the optimal parameter vector is approximately constant on each track segment.

\begin{figure}[p]
\centerline{\includegraphics[scale=0.3]{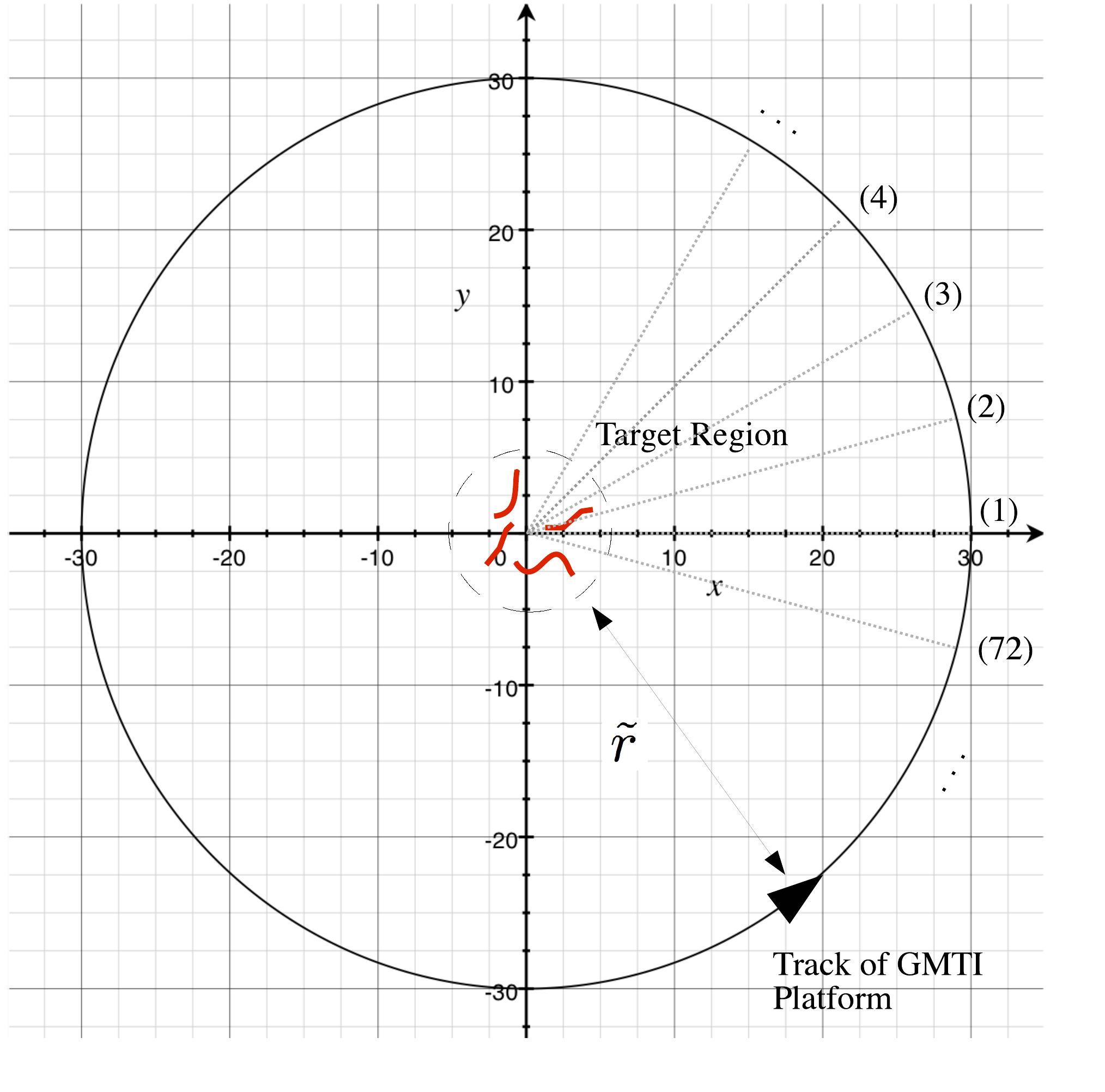}}
\caption{Representation of the persistent surveillance scenario in GMTI systems. The GMTI platform (aircraft) orbits the target region in order to obtain persistent measurements as long as targets remain within the target region. The nominal range from the platform to the target region is assumed to be 30km.}\label{fig:persist}
\end{figure}

Simulation parameters for this example are as follows: number of targets $L = 4$; sampling time $T=0.1$s; probability of detection $p_d = 0.9$; track variances of target model $\sigma_x = \sigma_y = 0.5$m; and measurement noise parameters $\sigma_r = 20$m, $\sigma_a = 0.5^\circ$, $\sigma_{\dot{r}} = 5$m/s. The platform velocity is now changing (assume a constant speed of $\tilde{v}=250$m/s), unlike  the previous example, which
assumed a constant velocity platform. Since the linearized model will be  different at each of the pre-specified points, (1)--(72), along the GMTI track, we  computed the optimal parametrized policy at each of the respective locations. The radar
manager then switches between these policies depending on the estimated position of the targets.

We consider Case 4 of  Section \ref{sec:formu} where the radar devotes all its resources to one target, and none to the other targets. That is, we assume a target priority vector of $\price = [1,0,0,0]$. In this case, the first target is allocated a Kalman filter, with all the other targets allocated {measurement-free} Kalman predictors. Since the threshold parametrization vectors depend on the target's state and measurement models, the first target $l=a$ has a unique parameter vector, where targets $l \neq a$ all have the same parameter vectors. Also, $\underline{\theta}^l = \theta$, for all $l \in\{1,2,...,L\}$.

We chose $\alpha^1 = 0.25$, $\alpha^2=\alpha^3=\alpha^4 = 0$, $\beta^1 = 0.25$, $\beta^2 = \beta^3 = \beta^4 = 1$ 
in stopping cost (\ref{eq:avgp}) (average mutual information difference stopping cost). The parametrized
policy considered was  $\mu_{\theta}(P^{a},P^{-a})$   in (\ref{eq:covpar}).
The optimal parametrized policy was computed using Algorithm \ref{alg1} at each of the $72$ locations on the GMTI sensor track. 
As the GMTI platform orbits the target region, we switch between these parametrized policy vectors, thus continually changing the adopted tracking policy. 
We implemented the following macro-manager:
$a = \argmax_{l = 1,...,L}\left\{ \log|P^l|\right\}$.
The priority vector was chosen as $\price^a = 1$ and $\price^l = 0$ for all $l\neq a$. Figure \ref{fig:persist_logdet}.
shows log-determinants of each of the targets' error covariance matrices
over multiple macro-management tracking cycles. 

\begin{figure}[p]
\centerline{\includegraphics[scale=0.4]{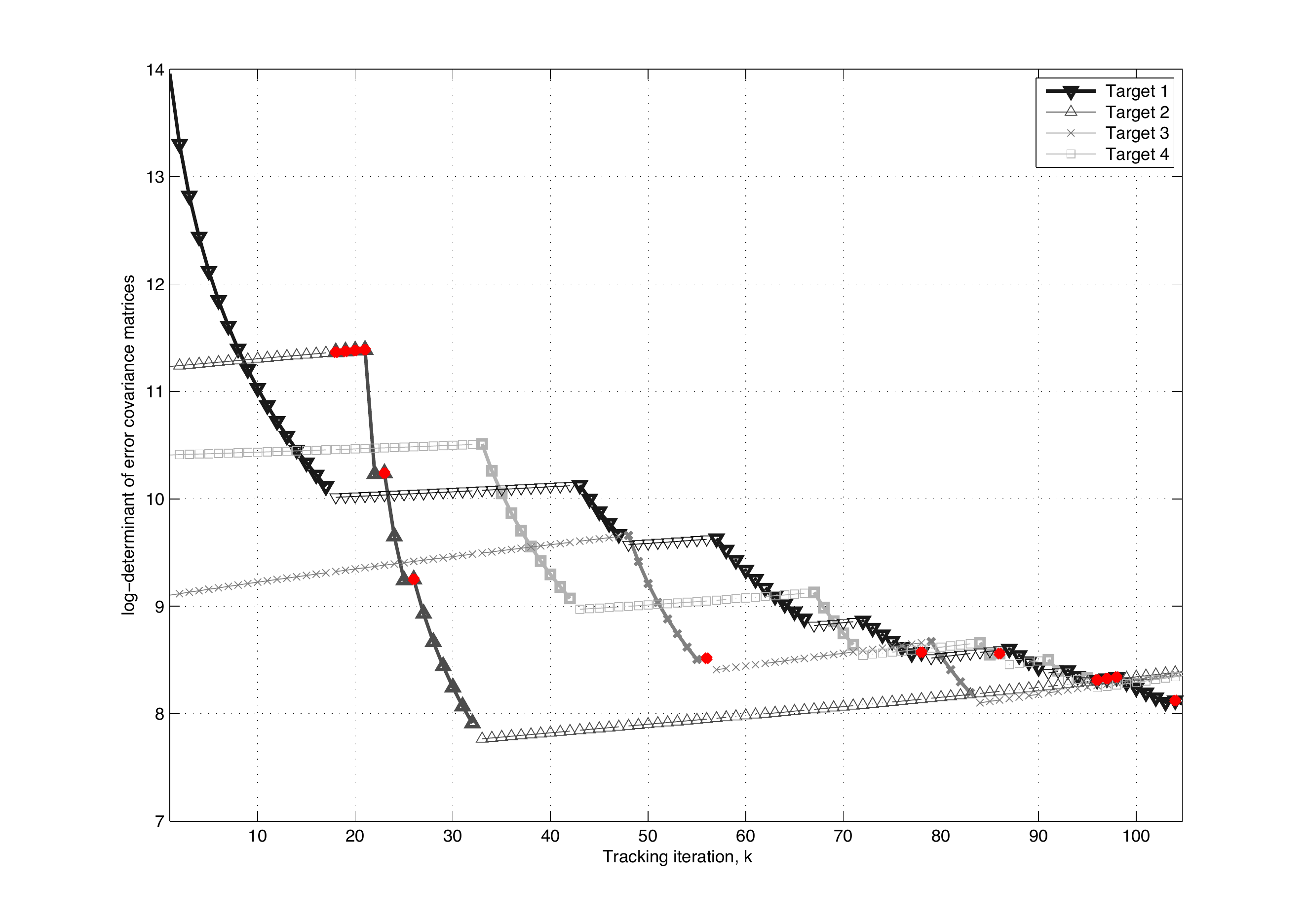}}
\caption{Plot of log-determinants for each target over multiple scheduling intervals.
On each scheduling interval, a Kalman filter is deployed to track one target and Kalman
predictors track the remaining 3 targets. The bold line  corresponds to the target
allocated the Kalman filter by the micro-manager in each scheduling interval.
Initially  a Kalman filter is deployed on target $l = 1$. Data points marked in red indicate missing observations.}\label{fig:persist_logdet}
\end{figure}

\section{Conclusions}
\label{sec:conclusion}
This paper considers  a sequential detection problem with mutual information
stopping cost. 
Using lattice programming
we
prove that the optimal policy 
 has a monotone structure in terms of the covariance estimates (Theorem \ref{thm:main}).
 The proof involved showing monotonicity of the Riccati and 
Lyapunov equations (Theorem \ref{thm:bitmead}).  Several examples of
parametrized decision policies that  satisfy this monotone structure  were given.
A simulation-based adaptive filtering algorithm (Algorithm (\ref{alg1})) was given to estimate the parametrized policy.
The sequential detection problem was illustrated in a GMTI radar scheduling problem with numerical
examples.

\appendix

\section{Proof of Theorem \ref{thm:main} and Theorem \ref{thm:bitmead}} \label{sec:proof}
This appendix presents the proof of the main result Theorem \ref{thm:main}. 
Appendix  \ref{app:prelim}  presents the value iteration algorithm and supermodularity that will be used as the basis
of the inductive proof.  
The proof of Theorem 1 in Appendix
\ref{app:th1} 
uses lattice programming \cite{Top98}
and depends on certain
monotone properties of the Kalman filter Riccati and Lyapunov equations. These properties
are proved in Theorem \ref{thm:bitmead} in Appendix \ref{sec:bitmead}.

\subsection{Preliminaries}  \label{app:prelim}
We first rewrite Bellman's equation  (\ref{eq:valuefcn}) in a form that is  suitable
for our analysis. 
Define 
\begin{align} 
 C(P) &= \cop - 
 \Cb(P) 
 +  \sum_{z^a,z^{-a}} \Cb(\R(P^a,z^a),\L(P^a),\R(P^{-a},z^{-a}),\L(P^{-a}))
 q_{z^a}q_{z^{-a}},
 \nonumber\\
  V(P) &= \Vb(P) -  \Cb(P), \quad \text{ where } P = (P^a,\bP^a,P^{-a},\bP^{-a}), \label{eq:coord} \\
  q_{z^l} &= \begin{cases} 
  p_d^l, & \text{ if } z^l \neq \emptyset, \\
   1 - p_d^l , &\text{ otherwise, } \end{cases}, l = 1,\ldots,L. \nonumber
  \end{align}
  In (\ref{eq:coord}) we have assumed that the missed observation events in (\ref{eq:obs}) are
  statistically independent between targets, and so
   $q_{z^{-a}} = \prod_{l\neq a} q_{z^l}$. Actually, the results below hold for any joint distribution 
 of
   missed observation events (and therefore allow these events to be dependent between targets). For
 notational convenience we assume (\ref{eq:obs}) and independent missed observation events.
   
Clearly $V(\cdot)$ and optimal decision policy $\mu^*(\cdot)$ satisfy Bellman's   equation 
\begin{align}
V(P) &=  \min \{\Q(P,1), \Q(P,2)\},\quad
 \mu^*(P) = \argmin_{u \in \{1,2\}} \bigl\{ \Q(P,u)\bigr\},
  \label{eq:costdef}  \\
 \Q(P,1) &= 0, \quad 
 \Q(P,2) = C(P) + \sum_{z^a,z^{-a}} V(\left(\R(P^a,z),\L(P^a),\R(P^{-a},z^{-a}),\L(P^{-a}) \right)q_{z^a} q_{z^{-a}}.
 \nonumber
  \end{align}
  Our goal is to characterize the stopping set defined as
  $$\Rs = \{P: \Q(P,1) \leq \Q(P,2)\} = \{P:  \Q(P,2)\geq 0 \}= \{ P: \mu^*(P) = 1\}.$$
Since the $\argmin$ function is translation invariant (that is, $\argmin_u f(P,u) = \argmin_u (f(P,u) + h(P))$ for any functions $f$ and $h$),   both the stopping set $\Rs$ and optimal policy $\mu^*$ in these
new coordinates are identical to those in the original coordinate system (\ref{eq:stopset}), (\ref{eq:valuefcn}).

\noindent {\em Value Iteration Algorithm}:
The value iteration algorithm  will be used
to construct a proof of Theorem \ref{thm:main} by mathematical induction.
Let $k=1,2,\ldots,$ denote iteration number. The   value iteration
algorithm is  a fixed point iteration of 
Bellman's equation and proceeds as follows:
\begin{align}\nonumber
V_0(P) &= -\Cb(P),
 \quad V_{k+1}(P)= \min_{u \in \{1,2\}} \Q_{k+1}(P,u), \\
\mu^*_{k+1}(P)&= \argmin_{u \in \{1,2\}} \Q_{k+1}(P,u), \quad 
\text{ where } \Q_{k+1}(P,1) = 0,
\nonumber\\
 \Q_{k+1}(P,2)& =  C(P) 
+ \sum_{z^a,z^{-a}}  V_k\left( \R(P^a,z^a),\R(P^{-a},z^{-a}) \right) q_{z^a}q_{z^{-a}} .
 \label{eq:vi}
\end{align}

{\em Submodularity}:
Next, we define the key concept of submodularity \cite{Top98}.  While it can be defined on general lattices
with an arbitrary partial order, here
 we restrict the definition to the posets  $[\pdf,\gr]$ and $[\R_+^m,\og]$, where the partial orders $\gr$ and $\og$ were defined above.

\begin{definition}[Submodularity and Supermodularity \cite{Top98}] \label{def:supermod}  A scalar function $\Q(P,u)$ is
submodular in $P^a$ if
 $$\Q(P^a,\bP^a,P^{-a},\bP^{-a},2) - \Q(P^a,\bP^a,P^{-a},\bP^{-a},1) \leq \Q(R^a,\bP^a,P^{-a},\bP^{-a},2) - \Q(R^a,\bP^a,P^{-a},\bP^{-a},1),$$ for $P^a \gr R^a$.
 $\Q(\cdot,\cdot,u)$ is supermodular if 
$-\Q(\cdot,\cdot,u)$ is submodular.
A scalar function  $\Q(P^a,\bP^a,P^{-a},\bP^{-a},u)$ is
sub/supermodular in each component of $P^{-a}$ if it is sub/supermodular in each component $P^{l}$, $l \neq a$.     An identical definition holds with respect to $\Q(\eig^a,\eig^{-a},u)$ on  $[\R_+^m,\og]$.
\end{definition}


The most important feature of a supermodular (submodular) function  $f(x,u)$ is that
$\argmin_u f(x,u)$  decreases (increases) in its argument $x$, see \cite{Top98}. This is summarized in the following result.

\begin{theorem}[\cite{Top98}] \label{res:monotone}
\label{res:supermod} Suppose $\Q(P^a,\bP^a,P^{-a},\bP^{-a},u)$ is submodular in $P^a$, submodular in 
$\bP^{-a}$, 
supermodular in $P^{-a}$ and supermodular in $\bP^a$.
Then {there} exists a $\mu^*(P^a,\bP^a,P^{-a},\bP^{-a}) = \argmin_{u\in \{1,2\}} \Q(P^a,\bP^a,
P^{-a},\bP^{-a},u)$, that  is increasing
in $P^a$, decreasing in $\bP^a$, increasing in $\P^{-a}$  and decreasing in $P^{-a}$.
\end{theorem}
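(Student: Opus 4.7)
Because the action set $\{1,2\}$ is binary, Topkis's theorem in this setting collapses to a one-dimensional sign analysis. I would begin by introducing the scalar ``advantage of continuing,''
\[
\Delta(P^a,\bP^a,P^{-a},\bP^{-a}) \;=\; \Q(P^a,\bP^a,P^{-a},\bP^{-a},2)\;-\;\Q(P^a,\bP^a,P^{-a},\bP^{-a},1),
\]
and adopt the tie-breaking convention that the selection $\mu^*(P)\in\argmin_{u\in\{1,2\}}\Q(P,u)$ returns the larger action $u=2$ whenever ties occur. Then $\mu^*(P)=2$ iff $\Delta(P)\leq 0$, and $\mu^*(P)=1$ iff $\Delta(P)>0$. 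With this reduction, monotonicity of $\mu^*$ becomes monotonicity of a single sign-indicator determined by $\Delta$, and all lattice machinery collapses to a pointwise comparison on $\reals$.

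Next I would re-read Definition \ref{def:supermod} as four direct monotonicity statements about $\Delta$ itself. Submodularity of $\Q$ in $P^a$ reads exactly $\Delta(P^a,\cdot)\leq\Delta(R^a,\cdot)$ whenever $P^a\gr R^a$, i.e.\ $\Delta$ is antitone in $P^a$ on the poset $[\pdf,\gr]$. Supermodularity in $\bP^a$ similarly says $\Delta$ is isotone in $\bP^a$; supermodularity in $P^{-a}$ (componentwise) says $\Delta$ is isotone in each component $P^l$, $l\neq a$; and submodularity in $\bP^{-a}$ says $\Delta$ is antitone in each component $\bP^l$, $l\neq a$. Each of these four statements is a verbatim rewriting of the defining inequality of sub-/supermodularity; no further argument is needed at this step.

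The final step is to push each monotonicity of $\Delta$ through the sign test to obtain the corresponding monotonicity of $\mu^*$. Fix $(\bP^a,P^{-a},\bP^{-a})$ and suppose $P^a \gr R^a$. Antitonicity of $\Delta$ in its first argument gives $\Delta(P^a,\cdot)\leq\Delta(R^a,\cdot)$, so whenever $\mu^*(R^a,\cdot)=2$ (equivalently $\Delta(R^a,\cdot)\leq 0$) we also have $\Delta(P^a,\cdot)\leq 0$ and hence $\mu^*(P^a,\cdot)=2$. Viewed in the totally ordered action set $\{1<2\}$ this reads $\mu^*(R^a,\cdot)\leq\mu^*(P^a,\cdot)$, which is precisely the claim that $\mu^*$ is increasing in $P^a$ on $[\pdf,\gr]$. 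The three remaining cases are dispatched by the identical two-line argument, each time reading off the appropriate sign implication from the corresponding monotonicity of $\Delta$.

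The only real subtlety, and the main (mild) obstacle, is the handling of ties $\Delta(P)=0$. Without a tie-breaking rule the argmin is set-valued and monotonicity must be stated in Topkis's strong set order; with the ``largest minimizer'' convention fixed above, the selector $\mu^*$ is literally monotone in the four arguments as the theorem asserts, and the assertion ``there exists $\mu^*$'' in the statement is what permits this selection. Apart from this bookkeeping, the proof is immediate once the argument is organized around the scalar function $\Delta$ rather than around $\Q$, and this is why the binary action space makes the full machinery of lattice programming unnecessary in the present proof.
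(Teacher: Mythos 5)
Your proof is correct, but note that the paper itself does not prove this statement: it is imported verbatim from Topkis (hence the citation in the theorem header), and the paper's only gesture at justification is the remark that a minimizer of a supermodular (submodular) function decreases (increases) in its argument. So what you have written is the missing elementary proof rather than an alternative to an in-paper one. Your route is exactly the specialization of Topkis's monotone comparative statics to a two-point action set: the defining inequality in Definition \ref{def:supermod} is literally the statement that $\Delta(P)=\Q(P,2)-\Q(P,1)$ is antitone (for submodularity) or isotone (for supermodularity) in the relevant matrix argument on $[\pdf,\gr]$, and the sign test with an explicit tie-breaking selection produces the monotone selector whose existence the theorem asserts. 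Two small remarks. First, your tie-breaking discussion is right but slightly overstated as ``the main obstacle'': with a binary action set \emph{both} extreme selections are monotone --- breaking ties toward $u=1$ gives $\mu^*(P)=2$ iff $\Delta(P)<0$, and the strict inequality pushes through the comparison $\Delta(P^a)\leq\Delta(R^a)$ just as the weak one does --- so any consistent convention works; what matters, as you say, is that the theorem only claims existence of \emph{some} monotone version of $\argmin$. Second, you silently (and correctly) read the statement's ``increasing in $\P^{-a}$'' as ``increasing in $\bP^{-a}$,'' which is indeed a typo in the paper, since the corresponding hypothesis is submodularity in $\bP^{-a}$. As for what each approach buys: the citation to \cite{Top98} covers general lattices of actions and the strong set order, which is overkill here; your argument is self-contained, two lines per coordinate, and makes transparent why no genuine lattice programming is needed once the action set is $\{1,2\}$.
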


Next we state a well known result (see \cite{AM79} for proof) 
that the evolution of the covariance matrix in the Lyapunov and Riccati equation are monotone.

\begin{lemma}[\cite{AM79}]\label{lem:1} $\mathcal{R}(\cdot)$ and $\L(\cdot)$ are   monotone operators on the poset
$[\pdf,\gr]$. That is,
if $P_1 \gr{P}_2$, then $\L(P_1) \gr \L({P_2})$ and  for all $z$,
$\mathcal{R}(P_1,z) \gr \mathcal{R}({P}_2,z)$.
\end{lemma}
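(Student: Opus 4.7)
The plan is to prove the two monotonicity claims in turn, reducing both to basic facts about quadratic forms and operator antitonicity of matrix inversion.

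For the Lyapunov operator, the argument is immediate: if $P_1 \gr P_2$, then $\L(P_1) - \L(P_2) = F(P_1-P_2)F^\p$, and for any $x \in \reals^m$ we have $x^\p F(P_1-P_2)F^\p x = (F^\p x)^\p (P_1-P_2)(F^\p x) \geq 0$ by the definition of $\gr$. Hence $\L(P_1) \gr \L(P_2)$, with no assumption on $F$ required.

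For the Riccati operator, the case $z = \emptyset$ reduces to $\L$ and is handled above. For $z \neq \emptyset$, I would first assume $P \succ 0$ and invoke the matrix inversion lemma to rewrite
\beq
\R(P,z) \;=\; F\bigl(P^{-1} + H^\p R^{-1} H\bigr)^{-1} F^\p + Q. \nonumber
\eeq
The crucial input is operator antitonicity of inversion on $\pdf$: if $P_1 \gr P_2 \succ 0$ then $P_1^{-1} \lr P_2^{-1}$. Applying this, and then adding $H^\p R^{-1} H$, gives $P_1^{-1} + H^\p R^{-1} H \lr P_2^{-1} + H^\p R^{-1} H$; inverting once more flips the order back to $(P_1^{-1} + H^\p R^{-1} H)^{-1} \gr (P_2^{-1} + H^\p R^{-1} H)^{-1}$. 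Conjugating by $F$ and adding $Q$ preserves $\gr$ by the same quadratic-form argument as in the Lyapunov step, so $\R(P_1,z) \gr \R(P_2,z)$.

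The one subtlety, and the main obstacle, is that Lemma \ref{lem:1} is stated on $\pdf$ where $P^{-1}$ need not exist. The cleanest way around this is a perturbation argument: apply the information-form calculation to $P_i + \epsilon I \succ 0$ for $\epsilon > 0$, then let $\epsilon \downarrow 0$ and use continuity of $\R(\cdot,z)$ in its first argument (which is immediate from the original Joseph-form expression, since $HPH^\p + R \succ 0$ regardless of whether $P$ is singular). An alternative, more conceptual route is to interpret $\R(P,z)$ as the minimum-mean-square predicted covariance for the model (\ref{eq:gauss}) initialized with covariance $P$, and observe that a system starting from $P_2 \lr P_1$ can always be simulated (suboptimally) by the Kalman filter designed for $P_1$, giving $\R(P_2,z) \lr \R(P_1,z)$ without any inverses; this is essentially the argument in \cite{AM79}. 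Either route closes the proof.
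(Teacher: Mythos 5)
Your proof is correct. Note, however, that the paper does not actually prove Lemma \ref{lem:1} at all: it states the result and defers entirely to the citation \cite{AM79}, so there is no in-paper argument to compare against. Your two-step argument is the standard one and is sound: the Lyapunov case is an immediate congruence/quadratic-form computation, and the Riccati case follows from the information-form identity $\R(P,z) = F(P^{-1}+H^\p R^{-1}H)^{-1}F^\p + Q$ together with the operator antitonicity of inversion on the positive definite cone. It is worth pointing out that this same matrix-inversion-lemma identity is exactly the one the paper itself derives later (equation (\ref{eq:det1}) in the proof of Theorem \ref{thm:bitmead}(ii)), so your route is very much in the spirit of the paper's own toolkit, just applied to monotonicity of the operator rather than of the determinant ratio. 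You are also right to flag the only genuine subtlety: $\pdf$ is the set of positive \emph{semi}-definite matrices, so $P^{-1}$ need not exist; your $\epsilon$-perturbation combined with continuity of $\R(\cdot,z)$ (guaranteed since $HPH^\p + R \gr R \succ 0$) closes that gap cleanly, and the closedness of the semidefinite cone lets the non-strict inequality pass to the limit. The alternative estimation-theoretic argument you sketch (a suboptimal filter initialized at the larger covariance dominates the optimal one) is indeed essentially the route taken in \cite{AM79} and has the advantage of avoiding inverses altogether. Either version is a complete and valid proof of the lemma.
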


Finally, we present the following lemma  which states that the stopping costs (stochastic observability)
are monotone in the covariance matrices. The proof of this lemma depends on Theorem \ref{thm:bitmead},
the proof
of which is given in Appendix \ref{sec:bitmead} below.

\begin{lemma} \label{lem:bob} For $\Cb(P)$ in Case 1 (\ref{eq:maxp}), Case 2  (\ref{eq:minp}) and Case 3
(\ref{eq:avgp}), the cost 
 $C(P^a,\bP^a,P^{-a},\bP^{-a})$ defined in (\ref{eq:coord}) is decreasing
 in $ P^a$, $  \bP^{-a}$, and increasing in 
 $  P^{-a}$, 
$  \bP^a$. (Case 4 is a special case when $\alpha_l = 0$ for all $l \in \{1,\ldots,L\}$.)
\end{lemma}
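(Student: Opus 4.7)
\textbf{Proof Proposal for Lemma \ref{lem:bob}.}

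The overall strategy is to exploit a clean decoupling of $C(P)$ into the high-priority target's contribution and the contribution from the remaining targets, after which each piece can be treated with Theorem \ref{thm:bitmead}. First I would substitute the explicit stopping costs \eqref{eq:maxp}--\eqref{eq:avgp} into the definition \eqref{eq:coord} of $C$. Because $\R(P^a,z^a)$ and $\L(\bP^a)$ depend only on $(P^a,\bP^a)$, while $\R(P^l,z^l)$ and $\L(\bP^l)$ for $l\neq a$ depend only on the corresponding $-a$ block, the expression $-\Cb(P)+\E_z[\Cb(\text{evolved})]$ splits as $D_a(P^a,\bP^a)+D_{-a}(P^{-a},\bP^{-a})$ in all three cases; only $D_{-a}$ sees $\Fa$.

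Next I would verify monotonicity for the high-priority block $D_a$, which is identical across the three cases. Pairing the $P^a$ terms from $-\Cb$ and from $\E[\Cb(\text{evolved})]$ yields $\beta^a\,\E_{z^a}[\log(\det\R(P^a,z^a)/\det P^a)]$; Theorem \ref{thm:bitmead}(ii) gives that the integrand is decreasing in $P^a$ on $[\pdf,\gr]$, and since $\beta^a\ge 0$ the expectation is too. Pairing the $\bP^a$ terms yields $-\alpha^a\log(\det\L(\bP^a)/\det\bP^a)$, which by Theorem \ref{thm:bitmead}(i) is increasing in $\bP^a$ because $\alpha^a\ge 0$. This establishes the claim on the $(P^a,\bP^a)$ coordinates.

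For the lower-priority block $D_{-a}$ I would treat Case~3 first since it is the cleanest: additivity of $\Fa$ lets $D_{-a}$ decompose into a sum indexed by $l\neq a$, each summand of the same shape as $D_a$ but with $(\alpha^l,\beta^l)$ and target $l$'s covariances. The same application of Theorem \ref{thm:bitmead} then yields the desired monotonicity in each $P^l$ and $\bP^l$ separately, and hence in $P^{-a}$ and $\bP^{-a}$ in the product partial order. For Cases~1 and~2 (max and min), I would fix an index $l\neq a$ and all other lower-priority covariances and vary only $P^l$ (resp.\ $\bP^l$). The pointwise (in $z^l$) monotonicity of $I_l$ and $J_l$ in $P^l$ comes from elementary monotonicity of $\log\det$, while Theorem \ref{thm:bitmead}(ii) supplies the crucial refinement that the difference $J_l-I_l$ is monotone increasing in $P^l$. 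Combined with the fact that $\Fa$ is monotone increasing in each coordinate, these ingredients are then used to compare $\Fa(\{I_{l'}\})$ and $\Fa(\{J_{l'}\})$ under the two choices of $P^l$.

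The main obstacle is Cases~1 and~2: unlike the sum, the extremum index under $\Fa=\max$ or $\Fa=\min$ can shift as $P^l$ changes, and can differ between $\{I_{l'}\}$ and $\{J_{l'}\}$, so monotonicity does not follow coordinate-wise from the monotonicity of $\Fa$ alone. I would handle this by a case analysis on which index achieves the extremum before and after the perturbation, using the one-step bound from Theorem \ref{thm:bitmead}(ii) to control the magnitude of change in each $J_l$ relative to $I_l$, and then integrating out $z^l$. Once this technical step is carried out for each $l\neq a$ in turn (and analogously for each $\bP^l$), the product-order monotonicity of $D_{-a}$ follows, completing the proof.
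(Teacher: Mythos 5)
Your treatment of the high-priority block and of Case~3 matches the paper's argument exactly: after substituting (\ref{eq:stop}) into (\ref{eq:coord}), the $(P^a,\bP^a)$ terms pair into $\beta^a\,\E_{z^a}\log\bigl(\det\R(P^a,z^a)/\det P^a\bigr)-\alpha^a\log\bigl(\det\L(\bP^a)/\det\bP^a\bigr)$, Theorem~\ref{thm:bitmead} gives the claimed monotonicity, and for $\Fa=\sum$ the lower-priority block splits into identical per-target summands. No issue there.

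The gap is in Cases~1 and~2. You correctly isolate the real difficulty --- the extremizing index can differ before and after the one-step update, and can depend on the realization of the missed detections --- but the proposed resolution does not close it. With $I_l=\alpha^l\log\det\bP^l-\beta^l\log\det P^l$ and $J_l(z^l)=\alpha^l\log\det\L(\bP^l)-\beta^l\log\det\R(P^l,z^l)$, monotonicity of the $-a$ block in a fixed component $P^{l_0}$ for Case~1 amounts to
\[
\max_{l\neq a} I_l-\max_{l\neq a} I_l^{+}\;\ge\;\E_{z^{-a}}\Bigl[\max_{l\neq a} J_l-\max_{l\neq a} J_l^{+}\Bigr],
\]
where $+$ denotes the quantities after increasing $P^{l_0}$. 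The ingredients you cite give $\delta_I:=I_{l_0}-I_{l_0}^{+}\ge\delta_J(z):=J_{l_0}-J_{l_0}^{+}\ge 0$, and hence $\max_l I_l-\max_l I_l^{+}=\min\bigl(\delta_I,(I_{l_0}-\max_{l\neq l_0}I_l)^{+}\bigr)$ (with $(x)^{+}=\max(x,0)$), and likewise for $J$. This comparison fails whenever $I_{l_0}$ is \emph{not} the pre-update maximum but $J_{l_0}(z)$ \emph{is} the post-update maximum on an event of positive probability: the left side is then $0$ while the right side is strictly positive, so the block is strictly \emph{decreasing} in $P^{l_0}$ there. Such configurations arise even with identical target dynamics once $p_d<1$ (the currently extremal target suffers a missed detection while target $l_0$ is detected), so no case analysis built only on Theorem~\ref{thm:bitmead} and the coordinatewise monotonicity of $\Fa$ can recover the claim. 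For comparison, the paper's own proof does not attempt your case analysis at all: it fixes $\lstar$ as the pre-update extremizer and writes the post-update term with the same index $\lstar$, i.e., it tacitly assumes the extremizer is preserved by the update --- which is precisely the assumption your (correct) diagnosis shows cannot be dispensed with.
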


\begin{proof}
For Case 1 and Case 2 let $\lstar = \arg\max_{l\neq a} \left[\alpha^l \log|\bP_k^l|
-  \beta^l \log|P_k^l| \right]$ or $\lstar = \arg\min_{l\neq a} \left[\alpha^l \log|\bP_k^l|
-  \beta^l \log|P_k^l| \right]$, respectively.
From (\ref{eq:coord}) with $|\cdot|$ denoting determinant,
\beq
C(P) = \cop - \alpha^a \log \frac{|\L(\bP^a)|}{|\bP^a|} +\beta^a \sum_{z^a} \log \frac{|\R(P^a,z^a)|}{|P^a|} q(z^a)
+ \alpha^{\lstar} \log \frac{|\L(\bP^\lstar)|}{|\bP^\lstar|} -  \beta^\lstar \log \frac{|\R(P^\lstar,z^\lstar)|}{|P^\lstar|}  q(z^\lstar).
\eeq
For Case 3, 
\beq
C(P) = \cop - \alpha^a \log \frac{|\L(\bP^a)|}{|\bP^a|} +\beta^a \sum_{z^a} \log \frac{|\R(P^a,z^a)|}{|P^a|} q(z^a)
+ \sum_{l\neq a} \alpha^l \log \frac{|\L(\bP^l)|}{|\bP^l|} - \sum_{l\neq a} \beta^l \log \frac{|\R(P^l,z^l)|}{|P^l|}  q(z^l).
\eeq
Theorem \ref{thm:bitmead}  shows that
$\frac{|\L(\bP^l)|}{|\bP^l|}$ and $\frac{|\R(P^l,z^l)|}{|P^l|}$ are decreasing in $\bP^l$ and $P^l$ for all $l$.
\end{proof}

\subsection{Proof of Theorem \ref{thm:main}} \label{app:th1}

\begin{proof}
The proof is by induction on the value iteration algorithm (\ref{eq:vi}).
Note $V_0(P)$ defined in (\ref{eq:vi}) is decreasing in $P^a,\bP^{-a}$ and increasing in $P^{-a},\bP^a$ via
Lemma \ref{lem:1}.

Next assume $V_k(P)$ is decreasing in $P^a,\bP^{-a}$ and increasing in $P^{-a},\bP^a$.
Since $\R(P^a,y)$, $\L(\bP^a)$, $\R(P^{-a},y^{-a})$ and $\L(\bP^{-a})$ are monotone increasing in $P^a$,
$\bP^a$, $P^{-a}$ and $\bP^{-a}$, it follows
that  the term $V_k\left( \R(P^a,z^a),\L(\bP^a),\R(P^{-a},z^{-a}),\L(\bP^{-a}) \right) q_{z^a}q_{z^{-a}}$
is decreasing in $P^a, \bP^{-a}$ and increasing in  $P^{-a},\bP^a$ in (\ref{eq:vi}).
Next, it follows from Lemma \ref{lem:bob} that  $C(P^a,,\bP^a,P^{-a},\bP^{-a})$  is decreasing in $P^a, \bP^{-a}$ and increasing in  $P^{-a},\bP^a$.
Therefore from (\ref{eq:vi}), $\Q_{k+1}(P,2)$ inherits this property.
Hence $V_{k+1}(P^a,P^{-a})$ is decreasing in $P^a, \bP^{-a}$ and increasing in  $P^{-a},\bP^a$.
 Since
value iteration converges pointwise, i.e, $V_{k}(P)$ pointwise $V(P)$, it follows that
 $V(P)$ is decreasing in $P^a, \bP^{-a}$ and increasing in  $P^{-a},\bP^a$.

Therefore,
 $\Q(P,2)$ is  decreasing in $P^a, \bP^{-a}$  and increasing in $P^{-a},\bP^{a}$. This implies
 $\Q(P,u)$ is submodular in $(P^a,u)$, submodular in $(\bP^{-a},u)$, supermodular in $(P^{-a},u)$ and supermodular
 in $(\bP^{a},u)$. Therefore, from Theorem \ref{res:monotone}, there exists a version
 of $\mu^*(P)$ that  is  increasing in $P^a,\bP^{-a}$ and decreasing in $P^{-a},\bP^a$.
   \end{proof}

\subsection{Proof of  Theorem \ref{thm:bitmead}} \label{sec:bitmead}
We start with the following lemma.
\begin{lemma}
If matrices $X$ and $Z$ are invertible, then for conformable matrices $W$ and $Y$,
\begin{align}
\text{det}(Z)\text{det}(X+YZ^{-1}W) = \text{det}(X)\text{det}(Z+WX^{-1}Y).\label{eq:lem1}
\end{align}
\end{lemma}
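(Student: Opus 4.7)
The plan is to prove this by applying the block-matrix Schur complement determinant formula in two different ways to a single $2\times 2$ block matrix, then equating the two resulting expressions.

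First, I would introduce the block matrix
\[
M \;=\; \begin{pmatrix} X & -Y \\ W & Z \end{pmatrix},
\]
whose dimensions are compatible because $W,Y$ are assumed conformable. Since $X$ is invertible, block Gaussian elimination on the first block column gives the factorization
\[
M \;=\; \begin{pmatrix} I & 0 \\ WX^{-1} & I \end{pmatrix} \begin{pmatrix} X & -Y \\ 0 & Z + WX^{-1}Y \end{pmatrix},
\]
and taking determinants yields $\det(M)=\det(X)\,\det(Z+WX^{-1}Y)$.

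Next, since $Z$ is invertible, I would apply the analogous factorization using the second block row (eliminating $-Y$ from the top-right using $Z^{-1}W$):
\[
M \;=\; \begin{pmatrix} I & -YZ^{-1} \\ 0 & I \end{pmatrix} \begin{pmatrix} X + YZ^{-1}W & 0 \\ W & Z \end{pmatrix},
\]
giving $\det(M)=\det(Z)\,\det(X+YZ^{-1}W)$. Equating the two expressions for $\det(M)$ yields the desired identity \eqref{eq:lem1}.

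There is no real obstacle here; this is a standard Schur-complement identity and the only thing to be careful about is the sign bookkeeping induced by the choice to place $-Y$ (rather than $Y$) in the $(1,2)$ block so that the two Schur complements produce $+WX^{-1}Y$ and $+YZ^{-1}W$ respectively. An alternative equally short route would be to write
\[
\begin{pmatrix} I & 0 \\ Z^{-1}W & I \end{pmatrix}\begin{pmatrix} X & -Y \\ 0 & Z \end{pmatrix}\begin{pmatrix} I & -X^{-1}Y \\ 0 & I \end{pmatrix}
\]
and compare determinants of the resulting equivalent forms, but the Schur-complement approach above is cleanest and leaves no routine calculation to grind through.
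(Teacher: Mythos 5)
Your proof is correct and is essentially the same argument as the paper's: both verify the identity by taking determinants of Schur-complement block factorizations of a $2\times 2$ block matrix built from $X,Y,W,Z$ (the paper places the sign on $W$, writing $\bigl(\begin{smallmatrix} X & Y \\ -W & Z\end{smallmatrix}\bigr)$, while you place it on $Y$, but the two Schur complements $X+YZ^{-1}W$ and $Z+WX^{-1}Y$ come out identically). No gap to report.
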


\begin{proof}
The Schur complement formulae  applied to $\left( {X\atop -W} {Y\atop Z} \right)$ yields,
\begin{multline}
\nonumber \left(
\begin{array}{ccc}
I & YZ^{-1}\\
0 & I \end{array}
\right)\left(
\begin{array}{ccc}
X+YZ^{-1}W & 0\\
0 & Z \end{array}
\right)\left(
\begin{array}{ccc}
I & 0\\
-Z^{-1}W & I \end{array}
\right) \\ = \left(
\begin{array}{ccc}
I & 0\\
-WX^{-1} & I \end{array}
\right)\left(
\begin{array}{ccc}
X & 0\\
0 & Z+WX^{-1}Y \end{array}
\right)\left(
\begin{array}{ccc}
I & X^{-1}Y\\
0 & I \end{array}
\right).
\end{multline}
Taking determinants yields (\ref{eq:lem1}).
\end{proof}




{\em Theorem \ref{thm:bitmead}(i)}:
Given positive definite matrices $Q$ and $P_1\succ P_2$ and arbitrary matrix $F$,
\begin{align} \frac{\text{det}(\L(P))}{\text{det}(P)} \text{ is decreasing in } P, \text{ or equivalently, }
\nonumber \frac{\text{det}(FP_1F^T+Q)}{\text{det}(P_1)}<\frac{\text{det}(FP_2F^T+Q)}{\text{det}(P_2)}
\end{align}

\begin{proof}
Applying  (\ref{eq:lem1}) with $[X,Y,W,Z] = [Q,F,F^T,P^{-1}]$,
\begin{align}
\frac{\text{det}(FPF^T+Q)}{\text{det}(P)}=\text{det}(P^{-1} + F^TQ^{-1}F)\text{det}(Q).\label{eq:thm1}
\end{align}
Since $P_1\succ P_2\succ 0$, then $0\prec P_1^{-1}\prec P_2^{-1}$ and thus $0\prec P_1^{-1} + F^TQ^{-1}F\prec P_2^{-1}+F^TQ^{-1}F$. 
Since positive definite dominance implies dominance of determinants, it follows that
\begin{align}
\nonumber \text{det}(P_1^{-1} + F^TQ^{-1}F)<\text{det}(P_2^{-1} + F^TQ^{-1}F).
\end{align}
Using (\ref{eq:thm1}), the result follows.
\end{proof}


{\em Theorem \ref{thm:bitmead}(ii)}:
Given positive definite matrices $Q,R$  and arbitrary matrix $F$,
$ \frac{\text{det}(\R(P,z))}{\text{det}(P)}$ is decreasing in $P$.
That is for  $P_1\succ P_2$
\begin{multline}
\frac{\text{det}(FP_1F^T - FP_1H^T(HP_1H^T+R)^{-1}HP_1F^T+Q)}{\text{det}(P_1)} \\ <\frac{\text{det}(FP_2F^T - FP_2H^T(HP_2H^T+R)^{-1}HP_2F^T+Q)}{\text{det}(P_2)}\label{eq:thm2}
\end{multline}

\begin{proof}
Using the  matrix inversion lemma
$ (A+BCD)^{-1} = A^{-1} - A^{-1}B(C^{-1}+DA^{-1}B)^{-1}DA^{-1}$,
\begin{align}
\nonumber (P^{-1}+H^TR^{-1}H)^{-1} = P-PH^T(HPH^T+R)^{-1}HP,\\
\nonumber F(P^{-1}+H^TR^{-1}H)^{-1}F^T+Q = FPF^T-FPH^T(HPH^T+R)^{-1}HPF^T+Q \\
\text{det}(FPF^T-FPH^T(HPH^T+R)^{-1}HPF^T+Q) = \text{det}(Q+F(P^{-1}+H^TR^{-1}H)^{-1}F^T).\label{eq:det1}
\end{align}
Applying the identity (\ref{eq:lem1}) with $[X,Y,W,Z] = [Q,F,F^T,P^{-1}+H^TR^{-1}H]$ we have,
\beq
\text{det}(P^{-1}+H^TR^{-1}H)\text{det}(Q+F(P^{-1}+H^TR^{-1}H)^{-1}F^T) = \text{det}(Q)\text{det}(P^{-1}+H^TR^{-1}H+F^TQ^{-1}F).\label{eq:det2}
\eeq
Further, using (\ref{eq:lem1}) with $[X,Y,W,Z] = [P^{-1},H^T,H,R]$, we have,
\begin{align}
\text{det}(P^{-1}+H^TR^{-1}H) = \text{det}(P^{-1})\text{det}(R+HPH^T)/\text{det}(R)\label{eq:det3}
\end{align}
Substituting  (\ref{eq:det3}) into (\ref{eq:det2}),
\begin{multline}
 \text{det}(P^{-1})\text{det}(R+HPH^T)\text{det}(Q+F(P^{-1}+H^TR^{-1}H)^{-1}F^T)  \\ = \text{det}(Q)\text{det}(P^{-1}+H^TR^{-1}H+F^TQ^{-1}F)\text{det}(R)
\end{multline}
\begin{align}
\frac{\text{det}(Q+F(P^{-1}+H^TR^{-1}H)^{-1}F^T)}{\text{det}(P)}=\frac{ \text{det}(Q)\text{det}(P^{-1}+H^TR^{-1}H+F^TQ^{-1}F)\text{det}(R)}{\text{det}(R+HPH^T)}.\label{eq:det4}
\end{align}
From (\ref{eq:det4}) and (\ref{eq:det1})
\begin{align}
\frac{\text{det}(FPF^T-FPH^T(HPH^T+R)^{-1}HPF^T+Q)}{\text{det}(P)}=\frac{ \text{det}(Q)\text{det}(P^{-1}+H^TR^{-1}H+F^TQ^{-1}F)\text{det}(R)}{\text{det}(R+HPH^T)}.
\end{align}
We are now ready to prove the result. Since $P_1\succ P_2\succ 0$,
\begin{itemize}
\item $0\prec P_1^{-1}+H^TR^{-1}H+F^TQ^{-1}F\prec P_2^{-1}+H^TR^{-1}H+F^TQ^{-1}F$,
\item $\text{det}(P_1^{-1}+H^TR^{-1}H + F^TQ^{-1}F)<\text{det}(P_2^{-1}+H^TR^{-1}H+F^TQ^{-1}F)$,
\item $R+HP_1H^T\succ R+HP_2H^T\succ 0$,
\item $\text{det}(R+HP_1H^T)>\text{det}(R+HP_2H^T)$.
\end{itemize}
Therefore,  (\ref{eq:thm2}) follows from the following inequality
\begin{align*}
\frac{ \text{det}(Q)\text{det}(P_1^{-1}+H^TR^{-1}H+F^TQ^{-1}F)\text{det}(R)}{\text{det}(R+HP_1H^T)}<\frac{ \text{det}(Q)\text{det}(P_2^{-1}+H^TR^{-1}H+F^TQ^{-1}F)\text{det}(R)}{\text{det}(R+HP_2H^T)}
\end{align*}
\end{proof}

\bibliographystyle{IEEEbib}
\bibliography{$HOME/vikramk/styles/bib/vkm}
\end{document}